\newfont{\footsc}{cmcsc10 at 8truept}
\newfont{\footbf}{cmbx10 at 8truept}
\newfont{\footrm}{cmr10 at 10truept}
\renewcommand{\ps@plain}{
	\renewcommand{\@oddfoot}{\footsc {\footbf }  \footrm\thepage}}
\makeatother \pagestyle{plain} \leftmargin=25mm
\newtheorem{thm}{Theorem}[section]
\newtheorem{lem}[thm]{Lemma}
\newtheorem{prop}[thm]{Proposition}
\theoremstyle{definition}
\newtheorem{defn}[thm]{Definition}
\newtheorem{rem}[thm]{Remark}
\title{On the Unimodular Isomorphism Problem of Convex Lattice Polytopes\footnotemark[2]\footnotetext[2]{This work is supported by the National Natural Science Foundation of China (NSFC12226006, NSFC11921001) and the Natural Key Research and Development Program of China (2018YFA0704701).}}
\date{}
\author{Qiuyue Liu\textsuperscript{1} and Zhanyuan Cai\textsuperscript{1,}\thanks{Corresponding author. E-mail address: 18976257573@163.com. Cellphone number: +86-18976257573} \\  
	{\small$^{1}$ Center for Applied Mathematics, Tianjin University, Tianjin, 300072, China}\\
}
\begin{document}
	\maketitle
	\begin{abstract}

This paper studies the \emph{unimodular isomorphism problem} (UIP) of convex lattice polytopes: given two convex lattice polytopes $P$ and $P'$, decide whether there exists a unimodular affine transformation mapping $P$ to $P'$. We show that UIP is graph isomorphism hard, while the polytope
congruence problem and the combinatorial  polytope isomorphism problem (Akutsu, 1998; Kaibel, Schwartz, 2003) were shown to be graph isomorphism complete, and both the lattice isomorphism problem ( $\mathrm{Sikiri\acute{c}}$, $\mathrm{Sch\ddot{u}rmann}$, Vallentin, 2009) and the projective/affine polytope isomorphism problem (Kaibel, Schwartz, 2003) were shown to be graph isomorphism hard. Furthermore, inspired by protocols for lattice (non-) isomorphism (Ducas, van Woerden, 2022; Haviv, Regev, 2014), we present a statistical zero-knowledge proof system for unimodular isomorphism of lattice polytopes. Finally, we propose an algorithm that given two lattice polytopes computes all unimodular affine transformations mapping one polytope to another and, in particular, decides UIP.

\vskip 0.2cm
\noindent\textbf{Keywords}:\  Convex lattice polytopes, unimodular isomorphism problem, graph isomorphism hard, statistical zero-knowledge proof

\vskip 0.2cm
\noindent\textbf{2010 MSC}: 52B20, 03D15, 94A60
	\end{abstract}

	\maketitle
	\section{Introduction}	
	A \emph{convex lattice polytope} in $\mathbb{E}^{n}$ is the convex hull of a finite subset of the integer lattice $\mathbb{Z}^n$. Equivalently, it is a convex polytope, all vertices of which are in $\mathbb{Z}^n$. Convex polytopes possess a rich structure arising from the interaction of algebraic, convex, analytic, and combinatorial properties, which gives rise to many  combinatorial and computational challenges. For general references on polytopes and lattice polytopes, we refer to \cite{Barvinok}, \cite{Ziegler},\cite{Gritzmann} and \cite{Gruber}.
	
	Two convex lattice polytopes $P, P'\subset \mathbb{Z}^{n}$ are said to be $\mathbb{Z}$-\emph{isomorphic} or \emph{unimodularly isomorphic} if there is an affine map $\mathcal{U}:\mathbb{R}^{n} \to \mathbb{R}^{n}$ with $\mathcal{U}(\mathbb{Z}^{n})=\mathbb{Z}^{n}$ and $\mathcal{U}(P)=P'$. This is clearly an equivalence relation, and equivalent lattice polytopes have the same volume and the same number of lattice points. One of the most fundamental problems about this equivalence relation is Arnold's problem,  where given the positive integer $m$ we study the number $v(n,m)$ of different classes of $n$-dimensional convex lattice polytopes with volume $m/n!$. In \cite{Arnold}, \cite{I}, \cite{Konyagin}, \cite{Pach} and \cite{Vershik}, upper and lower bounds of $v(n,m)$ have been achieved. More information about Arnold's problem can be found in \cite{I} and \cite{Liu}.  Furthermore, bounds for the numbers of different classes of $n$-dimensional convex lattice polytopes with the fixed number of lattice points  have been achieved in \cite{Zong} and \cite{Liu}. Blanco and Santos \cite{Blanco} described an algorithm to classify all convex lattice 3-polytopes of width larger
	than one and with a given number of lattice points. In 2021, Balletti\cite{Gabriele} also developed an algorithm for the complete enumeration of equivalence classes of $n$-dimensional lattice polytopes having (normalized) volume at most $m$.
	
	This paper is concerned with the \emph{unimodular isomorphism problem} (UIP) of convex lattice polytopes, namely, deciding whether two given convex lattice polytopes $P, P'\subset \mathbb{Z}^{n}$ are unimodularly isomorphic. The analogous problems were studied in \cite{Akutsu} and \cite{Kaibel}. Akutsu \cite{Akutsu} showed that the polytope congruence problem is graph isomorphism complete. Kaibel and Schwartz \cite{Kaibel} showed that the combinatorial isomorphism problem of convex polytopes is graph isomorphism complete and the projective$/$affine isomorphism problem is graph isomorphism hard, and gave a polynomial time algorithm for the combinatorial polytope isomorphism problem in bounded dimensions.
	
	Deciding whether two given combinatorial or algebraic structures are isomorphic is a notorious question in the theory of computing. One of its notable cases is the \emph{graph isomorphism problem} (GIP), in which given two graphs $G$ and $G'$, one has to decide whether there is a bijection $\varphi$ between their node sets such that $\{\varphi(v), \varphi(w)\}$ is an edge of $G'$ if and only if $\{v, w\}$ is an edge of $G$. If the nodes or edges of the two graphs are labeled, and if we consider only isomorphisms that preserve the labels, then the corresponding decision problem is called the \emph{labeled graph isomorphism problem}. GIP remains one of the few natural problems in NP that is neither known to be NP-complete nor known to be polynomial-time solvable. In a breakthrough, Babai \cite{Babai} proved that GIP is solvable in quasi-polynomial time $n^{polylog(n)}:= n^{(logn)^{\mathcal{O}(1)}}$, where $n$ denotes the number of nodes of the input graphs. This was the first improvement on the worst-case running time of a general graph isomorphism algorithm over the $2^{\mathcal{O}(\sqrt{nlogn})}$ bound established in \cite{Luks}. 
	
	Another well-known isomorphism problem is the \emph{lattice isomorphism problem} (LIP), in which, given two lattices $\Lambda$ and $\Lambda'$, the goal is to decide whether there exists an orthogonal linear transformation mapping $\Lambda$ to $\Lambda'$. Interestingly, it was shown in \cite{Dutour} that the isomorphism problem on lattices is at least as hard as that on graphs. In addition, Haviv and Regev \cite{Regev} proposed a statistical zero-knowledge proof system for  lattice non-isomorphism.  In 2022, Ducas and van Woerden \cite{Ducas} further built on their techniques to propose a statistical zero-knowledge proof system for lattice isomorphism, which directly implies an identification scheme  based on the search version of LIP (search-LIP).
	
	The structure of this paper is as follows. In Section 2, we present the relevant notations and concepts. In Section 3, we reduce the GIP to the UIP in polynomial time and thus the UIP is graph isomorphism hard. In Section 4,  thanks to Gaussian sampling \cite{Gentry} and the techniques developed in \cite{Ducas}, we define an average-case distribution for the UIP and propose a statistical zero-knowledge proof system for unimodular isomorphism. Finally, we describe an algorithm which given  $n$-dimensional convex lattice polytopes $P, P'\subset \mathbb{Z}^{n}$ with $d$ vertices computes all unimodular affine transformations mapping $P$ to $P'$ and, in particular, decides UIP.
	\section{Preliminaries}
	\subsection{Notation}
	The vector $\bm{x}$ is denoted in bold and should be interpreted as a column vector. We denote the isomorphism relation by $\cong$. Suppose $C\in \mathbb{R}^{n\times n'}$ is a matrix and $\bm{x}\in \mathbb{R}^{n}$, then we use $C+\bm{x}$ to denote the matrix obtained by adding the column vector $\bm{x}$ to each column vector of the matrix $C$.  
	We consider the full dimensional convex lattice polytopes $P$ with $d$ vertices in $n$-dimensional space. Let $\mathcal{V}(P)$ denote the vertex set of $P$, and let $|\mathcal{V}(P)|$ denote the number of vertices. We denote the unimodular isomorphism class of $P$ by $[P]$. Algorithmically, we can represent $P$ by an $n\times d$ matrix $V_P$ whose columns are given by the vertex vectors. Obviously, $V_P$ is uniquely defined only up to permutations $\sigma\in S_{d}$ of the columns. 
	
	Let $\mathrm{Aut}(P):=\{\mathcal{A}=(A,\bm{a})\in GL_n(\mathbb{Z})\times \mathbb{Z}^{n}:\mathcal{A}(P)=AP+\bm{a}=P\}$ denote the automorphism group of convex lattice polytope $P$. We denote the set of all unimodular matrices in $\mathrm{Aut}(P)$ and the set of all lattice translations in $\mathrm{Aut}(P)$ by $\mathrm{Aut_u}(P)$ and $\mathrm{Aut_t}(P)$, respectively. Obviously, $\mathrm{Aut}(P)$ is finite. If there exists $\mathcal{U}=(U,\bm{Z})$ such that $P'=UP+\bm{Z}$, the set of all isomorphisms from $P$ to $P'$ is given by $\{\mathcal{UA}=(UA,U\bm{a}+\bm{Z})\in GL_n(\mathbb{Z})\times \mathbb{Z}^{n}:\mathcal{A}\in \mathrm{Aut}(P)\}$.  
	\subsection{Unimodular Isomorphism Problem, Lattices and Quadratic Forms}
\noindent \emph{Unimodular isomorphism problem.} We formulate the unimodular isomorphism problem as a decision problem.

\begin{defn}[\textbf{lattice polytope version}] 
Given $n$-dimensional convex lattice polytopes $P,P'\subset \mathbb{Z}^{n}$, decide whether there exists a unimodular matrix $U\in GL_{n}(\mathbb{Z})$ and an integer vector $\bm{Z}\in \mathbb{Z}^{n}$ such that $P'=UP+\bm{Z}$.
\end{defn}
	Algorithmically convex lattice polytopes $P, P'\subset \mathbb{Z}^{n}$ are represented by vertex matrices $V_{P}, V_{P'}\in \mathbb{Z}^{n\times d}$, and if $P'=UP+\bm{Z}$, then $UV_{P}+\bm{Z}$ is a vertex matrix of $P'$. If $UV_{P}+\bm{Z}=V_{P'}$, then we can easily compute $U$ and $\bm{Z}$. However, in general $UV_{P}+\bm{Z}$ will only be equal to $V_{P'}$ up to some permutation matrix. More specifically, we give the following definition.
	\begin{defn}[\textbf{algorithm version}]
		Given the vertex matrices $V_{P}, V_{P'}\in \mathbb{Z}^{n\times d}$ of $n$-dimensional convex lattice polytopes $P,P'\subset \mathbb{Z}^{n}$, decide whether there exists a unimodular matrix $U\in GL_{n}(\mathbb{Z})$, an integer vector $\bm{Z}\in \mathbb{Z}^{n}$ and a $d\times d$ permutation matrix $M$ such that $V_{P'}=UV_{P}M+\bm{Z}$.
	\end{defn}
	\begin{defn}
		Let $\mathcal{V}(P)$ be the set of vertices of a convex polytope $P$. Then the \emph{vertex average} of $P$ is the point  
		\begin{align*}
			b_{P}:=\frac{1}{|\mathcal{V}(P)|}\sum_{\bm{v}\in \mathcal{V}(P)}\bm{v}.
		\end{align*}
	\end{defn} 
	For any full dimensional polytope $P\subset \mathbb{R}^{n}$, we can easily obtain the following proposition.
	\begin{prop}\label{ercixing}
		For any $n$-dimensional polytope $P\subset \mathbb{R}^{n}$, let $Q=(V_{P}-b_P)(V_{P}-b_P)^{t}$, then $Q$ is an $n \times n$  positive definite real symmetric matrix. 
	\end{prop}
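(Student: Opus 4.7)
The plan is to verify the three claimed properties of $Q=(V_P-b_P)(V_P-b_P)^t$ in turn: that it is real, that it is symmetric, and that it is positive definite. Realness is immediate from the fact that $V_P$ has integer (hence real) entries and $b_P\in\mathbb{R}^n$. Symmetry is the identity $(AA^t)^t=AA^t$ applied to $A=V_P-b_P$.

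For positive definiteness I would proceed in two steps. First, for any $\bm{x}\in\mathbb{R}^n$ write
\[
\bm{x}^t Q\bm{x}=\bm{x}^t(V_P-b_P)(V_P-b_P)^t\bm{x}=\bigl\|(V_P-b_P)^t\bm{x}\bigr\|^{2}\ge 0,
\]
which gives positive semidefiniteness. Second, I need to rule out $\bm{x}^t Q\bm{x}=0$ for $\bm{x}\ne \bm{0}$. Suppose $\bm{x}^t Q\bm{x}=0$; then $(V_P-b_P)^t\bm{x}=\bm{0}$, meaning every column $\bm{v}-b_P$ (with $\bm{v}\in\mathcal{V}(P)$) satisfies $(\bm{v}-b_P)^t\bm{x}=0$. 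Thus every vertex of $P$ lies in the affine hyperplane $H=\{\bm{y}\in\mathbb{R}^n : \bm{y}^t\bm{x}=b_P^t\bm{x}\}$. Since $P$ is the convex hull of $\mathcal{V}(P)$, this forces $P\subset H$, contradicting the hypothesis that $P$ is full-dimensional in $\mathbb{R}^n$. Hence $\bm{x}=\bm{0}$, and $Q$ is positive definite.

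The argument is essentially a textbook Gram-matrix computation, so there is no serious obstacle; the one substantive point is the implication ``full-dimensional polytope $\Rightarrow$ its vertices affinely span $\mathbb{R}^n$'', which I would justify by recalling that $P=\operatorname{conv}\mathcal{V}(P)$ has affine hull equal to the affine hull of $\mathcal{V}(P)$, and full-dimensionality of $P$ means this affine hull is all of $\mathbb{R}^n$. Everything else — symmetry, the quadratic-form rewriting, the translation by $b_P$ — is purely formal, so the proof should be short.
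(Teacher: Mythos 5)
Your proof is correct and is precisely the standard Gram-matrix argument (symmetry is formal, $\bm{x}^tQ\bm{x}=\|(V_P-b_P)^t\bm{x}\|^2$, and vanishing would force all vertices into an affine hyperplane, contradicting full-dimensionality). The paper gives no explicit proof — it simply asserts the proposition as "easily obtained" — so your write-out is exactly the argument it is tacitly invoking.
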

	Obviously, for the given $n$-dimensional polytope $P\subset \mathbb{R}^{n}$, the quadratic form $Q=(V_{P}-b_P)(V_{P}-b_P)^{t}$ is uniquely determined and we can easily obtain the following
	proposition. 
	\begin{prop}
		Two $n$-dimensional convex lattice polytopes $P, P'\subset \mathbb{Z}^{n}$ are unimodularly isomorphic if and only if $V_{P'}-b_{P'}=U(V_{P}-b_{P})M$, where $U\in GL_{n}(\mathbb{Z})$ and $M$ is a $d\times d$ permutation matrix.
	\end{prop}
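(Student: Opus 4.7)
The plan is to prove both directions of the equivalence separately. The forward direction is a direct computation showing that vertex averages transform equivariantly under unimodular affine maps, so subtracting them from the vertex matrices kills the translation component. The backward direction requires exhibiting a unimodular isomorphism from the given matrix equation, and the only subtle point is verifying that the reconstructed translation vector has integer entries, since the vertex averages $b_P,b_{P'}\in \mathbb{Q}^{n}$ are in general not integer.

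For the forward direction, suppose there exist $U\in GL_{n}(\mathbb{Z})$ and $\bm{Z}\in \mathbb{Z}^{n}$ with $P'=UP+\bm{Z}$. Since a unimodular affine map is an affine bijection, it carries the vertex set of $P$ bijectively to the vertex set of $P'$; thus $\mathcal{V}(P')=U\mathcal{V}(P)+\bm{Z}$, and the columns of $V_{P'}$ agree with those of $UV_{P}+\bm{Z}$ up to some reordering, i.e.\ $V_{P'}=UV_{P}M+\bm{Z}$ for a $d\times d$ permutation matrix $M$. Averaging the columns on both sides yields $b_{P'}=Ub_{P}+\bm{Z}$, and subtracting this from the previous identity gives $V_{P'}-b_{P'}=U(V_{P}M-b_{P})=U(V_{P}-b_{P})M$, where we use that permuting the columns of a matrix commutes with subtracting a fixed vector from every column.

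For the backward direction, assume $V_{P'}-b_{P'}=U(V_{P}-b_{P})M$ with $U\in GL_{n}(\mathbb{Z})$ and $M$ a $d\times d$ permutation matrix. Rearranging gives $V_{P'}=UV_{P}M+\bm{Z}$ where $\bm{Z}:=b_{P'}-Ub_{P}$. To finish the proof I would verify that $\bm{Z}\in \mathbb{Z}^{n}$: by construction every column of the matrix $V_{P'}-UV_{P}M$ equals $\bm{Z}$, but $V_{P'}$ and $UV_{P}M$ are both integer matrices (since $U\in GL_{n}(\mathbb{Z})$, the columns of $V_{P}$ lie in $\mathbb{Z}^{n}$, and $M$ merely permutes integer columns), whence $\bm{Z}\in \mathbb{Z}^{n}$. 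Consequently $P'=UP+\bm{Z}$ with $U\in GL_{n}(\mathbb{Z})$ and $\bm{Z}\in \mathbb{Z}^{n}$, so $P$ and $P'$ are unimodularly isomorphic.

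The main obstacle, modest as it is, is precisely the integrality of $\bm{Z}$ in the backward direction; the key trick is to recover $\bm{Z}$ as a column of the integer matrix $V_{P'}-UV_{P}M$ rather than from its rational expression $b_{P'}-Ub_{P}$. Everything else is bookkeeping about how the vertex-average operator interacts with column permutations and affine maps.
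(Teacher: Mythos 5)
Your proof is correct. The paper does not actually supply a proof of this proposition (it is presented with ``we can easily obtain the following proposition''), so there is nothing to compare against; the argument you give is the natural one and it is complete. The forward direction correctly uses that the map $V_P \mapsto V_P - b_P$ (subtracting the vertex average from each column) commutes with column permutation and transforms equivariantly under $\bm{v}\mapsto U\bm{v}+\bm{Z}$ because averaging is affine. In the backward direction, you rightly flag that $\bm{Z}=b_{P'}-Ub_P$ is a priori only rational, and your remedy --- observing that $V_{P'}-UV_PM$ is an integer matrix all of whose columns equal $\bm{Z}$ --- is exactly the point one must make; equivalently, $\bm{Z}$ can be read off as $\bm{v}'-U\bm{v}$ for any single matched pair of vertices, giving integrality at once. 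You also implicitly use, correctly, that affine bijections of $\mathbb{R}^n$ carry vertex sets to vertex sets and commute with taking convex hulls, which closes the loop from the matrix identity back to $P'=UP+\bm{Z}$ as polytopes.
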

	Thus, if $n$-dimensional convex lattice polytopes $P,P'\subset \mathbb{Z}^{n}$ are unimodularly isomorphic, then
	\begin{align}\label{ecxtg} Q'=(V_{P'}-b_{P'})(V_{P'}-b_{P'})^{t}=U(V_{P}-b_{P})MM^{t}(V_{P}-b_{P})^{t}U^{t}=UQU^{t}.
	\end{align}
	
\vskip 0.2cm
\noindent \emph{Lattices and quadratic forms.} A (\emph{geometric}) \emph{lattice} $\Lambda$ in $\mathbb{E}^{n}$ is the system of all integer linear combinations of $n$
linearly independent vectors $\bm{b}_1,...,\bm{b}_n \in \mathbb{E}^{n}$, 
\begin{align*}
	\Lambda=\{a_1 \bm{b}_1+\cdots+a_n \bm{b}_n: a_i \in \mathbb{Z}\}.
\end{align*}
The $n$-tuple $\{\bm{b}_1,...,\bm{b}_n\}$ is called a \emph{basis} of $\Lambda$. Let B be the $n\times n$ matrix whose $i$-th column is $\bm{b}_i$. We define a \emph{quadratic form} as a positive definite real symmetric matrix. 

For a lattice basis $B\in GL_n(\mathbb{R})$, the Gram matrix $Q=B^{t}B$ naturally induces a quadratic form.  
Every quadratic form $Q\in S^{>0}_{n}$ admits a unique upper-triangular matrix $B_{Q}$ such that $Q=B^{t}_{Q}B_{Q}$ via Cholesky decomposition, where $ S^{>0}_{n}$ denotes the set of all $n\times n$ positive definite real symmetric matrices. In the quadratic
form framework, lattice vectors $B\bm{x}\in \mathbb{R}^n$ are represented by their integral basis coefficients $\bm{x} \in \mathbb{Z}^n$. The inner product with respect to a quadratic form is naturally given by $\langle \bm{x}, \bm{y} \rangle_Q := \bm{x}^{t}Q\bm{y}$, and the norm by $\Vert \bm{x}\Vert^{2}_{Q}:=\bm{x}^{t}Q\bm{x}$. Note that this perfectly coincides with the geometry between the original lattice vectors. We denote the ball of radius $r$ by $\mathcal{B}_Q(r):= \{\bm{x} \in \mathbb{R}^n: \Vert \bm{x} \Vert_{Q} \leqslant r\}$. Translating the lattice definition, the \emph{ first successive minimum} $\lambda_1(Q)$
is given by 
\begin{equation*}
	\lambda_{1}(Q) := \mathop{\mathrm{min}}\limits_{\bm{x}\in \mathbb{Z}^{n}\setminus\{\bm{0}\}} \Vert \bm{x} \Vert_{Q},
\end{equation*}
and more generally, the \emph{$i$-th successive minimum}
 $\lambda_i(Q)$ is the smallest $r>0$ such that $\{\bm{x}\in \mathbb{Z}^n : \Vert \bm{x} \Vert_{Q} \leqslant r\}$ spans a space of dimension at least $i$.
 
\subsection{Discrete Gaussians and Sampling}
Discrete Gaussian sampling has been fundamental to lattice-based cryptography, enabling the generation of short or near-center lattice vectors while ensuring no leakage of secret key information \cite{Gentry}. 
 We introduce the relevant concepts and define the discrete Gaussian distribution $D_{P, s, \bm{c}}$ in the language of quadratic forms. 
\vskip 0.2cm
\noindent \emph{Distribution.}  For any vectors $\bm{c},\bm{x}\in \mathbb{R}^{n}$ and any $s>0$, let
\begin{align*}
	\rho_{s,\bm{c}}(\bm{x})=e^{-\pi \parallel (\bm{x}-\bm{c})/s \parallel^{2}}
\end{align*}
be a \emph{Gaussian function} centered in $\bm{c}$ and scaled by a factor of $s$. The total measure associated to $\rho_{s,\bm{c}}$ is $\int_{\bm{x}\in \mathbb{R}^{n}}\rho_{s,\bm{c}}(\bm{x})d\bm{x}=s^n$.  Therefore, we can define the (\emph{continuous}) \emph{Gaussian distribution} around $\bm{c}$ with parameter $s$ by its probability density function
\begin{align*}
	\forall \bm{x}\in \mathbb{R}^{n},\ D_{s,\bm{c}}(\bm{x})=\frac{\rho_{s,\bm{c}}(\bm{x})}{s^n}.
\end{align*}
The discrete Gaussian distribution is obtained by restricting the continuous
Gaussian distribution to a discrete lattice. Then, for any vector $\bm{c}$, real $s>0$, and lattice $\Lambda$, we define the \emph{discrete Gaussian distribution} $D_{\Lambda,s,\bm{c}}$ over $\Lambda$ by
\begin{align}\label{lisangaosi}
	\forall \bm{x}\in \Lambda, \ D_{\Lambda,s,\bm{c}}(\bm{x})=\frac{D_{s,\bm{c}}(\bm{x})}{D_{s,\bm{c}}(\Lambda)}=\frac{\rho_{s,\bm{c}}(\bm{x})}{\rho_{s,\bm{c}}(\Lambda)},
\end{align}
where $\rho_{s,\bm{c}}(\Lambda)=\sum_{\bm{x}\in \Lambda}\rho_{s,\bm{c}}(\bm{x})$.

For any $n$-dimensional polytope $P\subset \mathbb{R}^{n}$, $Q := (V_{P}-b_{P})(V_{P}-b_{P})^{t}$ is the $n\times  n$ positive definite real symmetric matrix.
We define the Gaussian function $\rho_{P,s,\bm{c}}$ on $\mathbb{R}^{n}$ with center  $\bm{c}\in \mathbb{R}^{n}$ and parameter $s > 0$ by
\begin{align*}
	\forall \ \bm{x}\in \mathbb{R}^{n},\ \rho_{P,s,\bm{c}}(\bm{x})=\rho_{Q,s,\bm{c}}(\bm{x}):=\exp(-\pi \| \bm{x}-\bm{c}\|^{2}_{Q}/s^{2}),
\end{align*}
where $\| \bm{x}-\bm{c}\|^{2}_{Q}=(\bm{x}-\bm{c})^{t}Q(\bm{x}-\bm{c})$.
In the quadratic form setting, the discrete lattice will always be $\mathbb{Z}^{n}$, but with the geometry induced by the quadratic form. For any $n$-dimensional polytope $P\subset \mathbb{R}^{n}$, we define the discrete Gaussian distribution $D_{P,s,\bm{c}}$ with center $\bm{c}\in \mathbb{R}^{n}$ and parameter $s>0$ by
\begin{equation}\label{Dps}
	\mathop{\Pr} \limits_{X\thicksim D_{P,s,\bm{c}}} [X=\bm{x}] =\mathop{\Pr}\limits_{X \thicksim D_{Q,s,\bm{c}}}  [ X =\bm{x}]: = \frac{\rho_{Q,s,\bm{c}}(\bm{x})}{\rho_{Q,s,\bm{c}}(\mathbb{Z}^{n})}\ \mathrm{if}\ \bm{x}\in \mathbb{Z}^{n}, \mathrm{and}\ 0\ \mathrm{otherwise}.
\end{equation}
If the center $\bm{c}$ is not denoted, we have $\bm{c}=\bm{0}$.
 
\vskip 0.2cm
\noindent \emph{Gaussian sampling.} For many practical purposes, it is necessary to actually sample (close to) the discrete Gaussian distribution in an efficient manner. Gentry et al. \cite{Gentry} gave an efficient procedure that samples within negligible statistical distance of any (not too
narrow) discrete Gaussian distribution. Brakerski et al. \cite{Brakerski} modified this sampler so that the output is distributed exactly as a discrete Gaussian. To simplify later proofs, we use an exact sampling algorithm by Brakerski et al. \cite{Brakerski} and follow the formulation in \cite{Ducas}.
\begin{lem}[\cite{Brakerski},\cite{Ducas}]\label{caiyang} There is a polynomial-time algorithm $\bm{\mathrm{DiscreteSample}}(Q,s,\bm{c})$ that given a quadratic form $Q\in S^{>0}_{n}$, center $\bm{c}\in \mathbb{R}^{n}$, and a parameter $s\geq \parallel B^{*}_{Q}\parallel \cdot \sqrt{ln(2n+4)/\pi}$, returns a sample distributed as $D_{Q,s,\bm{c}}$.
\end{lem}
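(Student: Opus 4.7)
This lemma is quoted essentially verbatim from \cite{Brakerski,Ducas}, so my plan is to reconstruct their argument by combining the GPV Gaussian sampler of \cite{Gentry} with the exact-sampling refinement of \cite{Brakerski}, translated into the quadratic form language. First I would compute the Cholesky factorization $Q=B_{Q}^{t}B_{Q}$ with $B_{Q}$ upper triangular in polynomial time; this identifies $(\mathbb{Z}^{n},Q)$ with the geometric lattice $B_{Q}\mathbb{Z}^{n}\subset\mathbb{R}^{n}$ under the standard inner product, so that $D_{Q,s,\bm{c}}$ corresponds to the ordinary discrete Gaussian on $B_{Q}\mathbb{Z}^{n}$ with center $B_{Q}\bm{c}$ and parameter $s$, and $\|B_{Q}^{*}\|$ becomes the maximum Gram-Schmidt length of this geometric basis.

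Next I would invoke Klein's randomized nearest-plane procedure, which is the heart of the GPV sampler. Writing $\bm{b}_{1},\ldots,\bm{b}_{n}$ for the columns of $B_{Q}$ and $\bm{b}_{i}^{*}$ for their Gram-Schmidt orthogonalization, the algorithm samples integer coordinates $c_{n},c_{n-1},\ldots,c_{1}$ iteratively: at step $i$, one draws $c_{i}$ from a one-dimensional discrete Gaussian on $\mathbb{Z}$ with parameter $s/\|\bm{b}_{i}^{*}\|$ centered at the projection of the residual target onto $\bm{b}_{i}^{*}$, and then subtracts $c_{i}\bm{b}_{i}$ from the residual. The hypothesis $s\geq \|B_{Q}^{*}\|\cdot\sqrt{\ln(2n+4)/\pi}$ is exactly the standard smoothing-parameter condition for each of the one-dimensional sublattices encountered in the recursion, so the joint distribution of the $c_{i}$ agrees with $D_{Q,s,\bm{c}}$ up to negligible statistical distance. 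To upgrade \emph{statistical} closeness to \emph{exact} equality I would plug in the modification of \cite{Brakerski}, in which a short rejection-sampling correction is applied at each step so that no bias accumulates, while the expected running time remains polynomial.

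The main obstacle is the smoothing-parameter analysis: one must show that for the stated lower bound on $s$ the conditional one-dimensional Gaussians in the recursion are essentially flat on their cosets of $\mathbb{Z}$, so that multiplying their densities reproduces the target density $D_{Q,s,\bm{c}}$ up to negligible, and after the Brakerski et al.\ correction zero, error. This rests on Banaszczyk-type Poisson-summation estimates of the form $\rho_{s}(\Lambda+\bm{v})\in(1\pm\epsilon)\rho_{s}(\Lambda)$ whenever $s$ exceeds the smoothing parameter $\eta_{\epsilon}(\Lambda)$ with $\epsilon$ chosen superpolynomially small. Once this bound is in place, verifying the output distribution is $D_{Q,s,\bm{c}}$ and bookkeeping the polynomial-time bound on the Cholesky, iterated 1D sampling, and rejection steps is then routine.
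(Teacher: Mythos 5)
The paper does not prove this lemma at all: it is stated as a citation to Brakerski et al.\ and Ducas--van Woerden, and the surrounding text explicitly defers to those works. So there is no ``paper's proof'' to compare against; your task was effectively to reconstruct the argument from the cited literature, which you do faithfully. Your pipeline --- Cholesky to pass from $(\mathbb{Z}^n,Q)$ to the geometric lattice $B_Q\mathbb{Z}^n$, Klein/GPV randomized nearest-plane with one-dimensional discrete Gaussians of width $s/\|\bm{b}_i^*\|$, smoothing-parameter control via Poisson summation, and the Brakerski et al.\ correction to upgrade statistical to exact sampling --- is precisely the standard proof in those references, so the approach is the intended one.

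One small imprecision worth flagging: the threshold $s\geq\|B_Q^*\|\sqrt{\ln(2n+4)/\pi}$ corresponds to taking $\eta_\epsilon(\mathbb{Z})$ with $\epsilon\approx 1/(n+1)$, i.e.\ an \emph{inverse-polynomially} small $\epsilon$, not a superpolynomially small one as you write. The point of the Brakerski et al.\ modification is exactly that such a mild bound on $s$ already suffices for an \emph{exact} sampler once the one-dimensional steps are corrected, whereas the original GPV analysis needs $\epsilon$ negligible (hence a $\omega(\sqrt{\log n})$ slack) to get statistical closeness. This does not break your argument --- you correctly appeal to the exact-sampling correction --- but the role of the parameter choice is slightly misattributed: it is the correction, not a tiny $\epsilon$, that removes the residual bias.
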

Here, $B^{*}_{Q}$ denotes the Gram-Schmidt orthogonalization of $B_{Q}$, and $\parallel B^{*}_{Q}\parallel$ is the length of the longest vector
in it. 

Through the above process, for the given $n$-dimensional convex lattice polytope $P\subset \mathbb{Z}^{n}$ represented by the vertex matrix $V_{P}$, we can obtain the quadratic form  $Q=(V_{P}-b_P)(V_{P}-b_{P})^{t}\in S^{>0}_{n}$ and the  discrete Gaussian distribution  $D_{P,s,\bm{c}}=D_{Q,s,\bm{c}}$, and sample from the distribution by Lemma \ref{caiyang}. 

\section{The Complexity of Unimodular Isomorphism Problem}
In this section, we give the complexity of unimodular isomorphism problem by Karp reducing graph isomorphism problem to it. Specifically, a decision problem $C$ is \emph{Karp reducible} to another decision problem $D$, if there is a polynomial time algorithm which constructs from an instance $I$ of $C$ an instance $J$ of $D$ with the property that the answer for $J$ is "yes" if and only if the answer for $I$ is "yes". Often a decision problem $\varPi$ is called \emph{graph isomorphism hard} if the graph isomorphism problem is Karp reducible to $\varPi$. 

Then, the result in this section is based on a polynomial time construction from graphs to convex lattice polytopes. Let $G = (V, E)$ be a graph with $n = |V|$ nodes, and $P(G)$ denote the convex lattice polytope constructed from $G$.

\paragraph{Step 1:} Choose a bijection $\sigma$ from the node set $V= \{v_1, \ldots, v_n\}$ to  $n$ vertices of the $(n-1)$-dimensional lattice simplex $\Delta_{n-1} := \text{conv}(\bm{e}_1, \ldots, \bm{e}_n)$, where $\bm{e}_i$ is the $i$-th $n$-dimensional unit vector and $\sigma(v_i)=\bm{e}_i$, $i=1,...,n$.

\paragraph{Step 2:} Construct the convex lattice polytope 
$P(G)$ by taking the convex hull of the origin, standard basis vectors, and edge-associated vectors. Specifically, associate each edge $\{v_i, v_j\}$ of $G$ with \(\bm{e}_i + \bm{e}_j\), yielding the set of vectors $\mathcal{S}(P(G))=\{\bm{0}, \bm{e}_1,..., \bm{e}_n, \bm{e}_{i_1}+\bm{e}_{j_1},...,\bm{e}_{i_t}+\bm{e}_{j_t}\}$, where $t=|E|$ and $i_1, ...,i_t, j_1, ..., j_t \in \{1, 2, ..., n\}$. Then, $P(G)=\text{conv}(\mathcal{S}(P(G)))$. 
Obviously, $P(G)$ is an $n$-dimensional convex lattice polytope.

\begin{rem} Obviously, there exists a polynomial-time algorithm that computes the finite point set  $\mathcal{S}(P(G))$ of \(P(G)\) from \(G\).
\end{rem}
The important property of $P(G)$ is that it encodes the entire structure of $G$. The subset $\{\bm{e}_1,...,\bm{e}_n\}$ of $\mathcal{S}(P(G))$ encodes the node set of graph $G$, while the subset $\{\bm{e}_{i_1}+\bm{e}_{j_1},...,\bm{e}_{i_t}+\bm{e}_{j_t}\}$ encodes the edge set of graph $G$. 

\begin{thm} 
  There is a Krap reduction	of the graph isomorphism problem to the unimodular isomorphism problem.
\end{thm}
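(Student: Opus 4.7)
The plan is to show that the map $G\mapsto P(G)$ is a Karp reduction from GI to UIP. Polynomial-time computability is immediate (the point set $\mathcal{S}(P(G))$ has size $1+n+|E|$, by the preceding remark), so what remains is the equivalence $G\cong G'\Longleftrightarrow P(G)\cong_{\mathbb{Z}} P(G')$.

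For the easy direction, given a graph isomorphism $\phi:V\to V'$, I would take $U_{\phi}$ to be the $n\times n$ permutation matrix with $U_{\phi}\bm{e}_{i}=\bm{e}_{\phi(i)}$. Since permutation matrices are unimodular, $U_{\phi}\in GL_{n}(\mathbb{Z})$; it fixes $\bm{0}$, permutes basis vectors by $\phi$, and sends $\bm{e}_{i}+\bm{e}_{j}$ to $\bm{e}_{\phi(i)}+\bm{e}_{\phi(j)}$, which lies in $\mathcal{S}(P(G'))$ precisely because $\phi$ preserves edges. Hence $U_{\phi}$ maps $\mathcal{S}(P(G))$ bijectively onto $\mathcal{S}(P(G'))$ and $U_{\phi}P(G)=P(G')$, giving a unimodular isomorphism with translation $\bm{Z}=\bm{0}$.

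For the converse, suppose $\mathcal{U}=(U,\bm{Z})$ realizes $UP(G)+\bm{Z}=P(G')$. My first step is to identify the lattice points of $P(G)$ exactly. Because every vertex of $P(G)$ is a $0$-$1$ vector, $P(G)\subseteq[0,1]^{n}$, so any lattice point of $P(G)$ is itself a $0$-$1$ vector; a coordinate-sum argument---each coordinate of a convex combination of $\mathcal{S}(P(G))$ lies in $[0,1]$ and the total coordinate sum is at most $2$---then forces its support to have size $0$, $1$, or $2$, with the size-$2$ case only reachable through a unique edge-vector of $\mathcal{S}(P(G))$. Hence $P(G)\cap\mathbb{Z}^{n}=\mathcal{S}(P(G))$, and unimodularity of $\mathcal{U}$ therefore yields a bijection $\mathcal{S}(P(G))\to\mathcal{S}(P(G'))$.

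The main obstacle is to show that this bijection preserves the canonical partition of $\mathcal{S}$ into the apex $\bm{0}$, the $n$ basis vertices, and the $|E|$ edge vertices, using only intrinsic polytopal data. My plan is to exploit the $2$-face structure of $P(G)$: each edge $\{v_{i},v_{j}\}\in E$ contributes a square $2$-face $\operatorname{conv}(\bm{0},\bm{e}_{i},\bm{e}_{j},\bm{e}_{i}+\bm{e}_{j})$---and $\bm{0}$, $\bm{e}_{i}+\bm{e}_{j}$ are diagonally opposite corners of this square, hence never joined by a $1$-face---while each $v_{i}\in V$ yields a simplex face $\{x_{i}=1\}\cap P(G)=\operatorname{conv}(\bm{e}_{i},\{\bm{e}_{i}+\bm{e}_{j}:\{v_{i},v_{j}\}\in E\})$. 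Counting, for each vertex of $P(G)$, its incidences with square and simplex $2$-faces together with its adjacency structure in the $1$-skeleton should intrinsically separate the three types; degeneracies (such as isolated vertices of $G$, where certain vertices of $P(G)$ become structurally interchangeable) are absorbed by polytopal automorphisms and do not obstruct the argument. Once the partition is preserved, the restriction of $\mathcal{U}$ to basis vertices gives a permutation $\phi:V\to V'$, and preservation of edge vertices translates exactly to $\{v_{i},v_{j}\}\in E\Leftrightarrow\{\phi(v_{i}),\phi(v_{j})\}\in E'$, the desired graph isomorphism.
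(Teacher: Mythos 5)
Your construction $P(G)$ is the same as the paper's, but the overall reduction is not: the paper does \emph{not} reduce via $G\mapsto P(G)$ directly. Except in the trivial case $E_G=E_{G'}=\varnothing$, it first replaces $G$ by a graph $H$ obtained by adjoining a new vertex adjacent to every vertex of $G$, and reduces via $H\mapsto P(H)$. The added universal vertex forces $H$ to be connected, to contain triangles, and to have a node adjacent to all others, and the paper's entire ``hard direction'' is a case analysis on the translation $\bm{Z}$ that uses precisely these three properties to show $\bm{Z}=\bm{0}$ and $\{\bm{u}_1,\dots,\bm{u}_n\}=\{\bm{e}_1,\dots,\bm{e}_n\}$. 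You instead try to recover the partition of $\mathcal{S}(P(G))$ into apex, basis vertices, and edge vertices intrinsically from the face lattice of $P(G)$ itself. That is a genuinely different route; your identification $P(G)\cap\mathbb{Z}^n=\mathcal{S}(P(G))$ and the easy direction are fine.

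The gap is in the step you flag with ``should'': the partition is \emph{not} intrinsically recoverable from $P(G)$, and the failures are not confined to isolated vertices as your parenthetical suggests. Take $G=K_{1,m}$ with center $v_1$. The affine unimodular map $\bm{x}\mapsto U\bm{x}+\bm{e}_1$, where $U\bm{e}_1=-\bm{e}_1$ and $U\bm{e}_j=\bm{e}_j$ for $j>1$, is an automorphism of $P(K_{1,m})$: it swaps $\bm{0}\leftrightarrow\bm{e}_1$ and $\bm{e}_j\leftrightarrow\bm{e}_1+\bm{e}_j$ for $j>1$. So here the apex $\bm{0}$ and the basis vertex $\bm{e}_1$ of a non-isolated node are genuinely interchangeable, and no incidence count with square $2$-faces, simplex faces, or $1$-skeleton degrees (all of which the automorphism preserves) can separate them. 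Consequently ``counting\ldots should intrinsically separate the three types'' is false as stated, and the clause ``absorbed by polytopal automorphisms'' is carrying the whole argument: you would actually have to prove that \emph{every} unimodular isomorphism $P(G)\to P(G')$ can be post-composed with an automorphism of $P(G')$ so that the resulting bijection on $\mathcal{S}$ respects the partition — which is essentially the theorem restated. The paper's universal-vertex modification is designed exactly to sidestep this: after the modification the apex is uniquely pinned down by the triangle/connectivity structure, and the case analysis closes without any claim about the automorphism group of $P(G)$.
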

\begin{proof} Let \(G=(V_G,E_G)\) and \(G'=(V_{G'},E_{G'})\) be graphs and let $n_g=|V_G|$ and $n_{g'}=|V_{G'}|$. 
	
(1). Suppose $E_G=E_{G'}=\varnothing$. Through the above polynomial time construction, we can get $n_g$-dimensional lattice polytope $P(G)=\mathrm{conv}(\bm{0},\bm{e}_1,...,\bm{e}_{n_g})$ and $n_{g'}$-dimensional lattice polytope $P(G')=\mathrm{conv}(\bm{0},\bm{e}_1,...,\bm{e}_{n_{g'}})$. \(G \cong G'\) if and only if $n_g=n_{g'}$.  If \(G \cong G'\), then $P(G)=P(G')=\mathrm{conv}(\bm{0},\bm{e}_1,...,\bm{e}_{n_g})$ and $P(G)$, $P(G')$ are unimodularly isomorphic. 
 If $P(G)\cong P(G')$, then $n_g=n_{g'}$ and \(G \cong G'\).

(2). Suppose $E_G \neq \varnothing$ and $E_{G'}\neq \varnothing$. We modify $G$ and $G'$ by adding vertices $v_g$ and $v_{g'}$ to $V_G$ and $V_{G'}$ respectively, where $v_g$ is adjacent to all vertices in $V_G$ and $v_{g'}$ is adjacent to all vertices in $V_{G'}$. We call the new graphs $H$ and $H'$, and  they have the following properties.
	\begin{itemize}
	\item \(H \cong H'\) if and only if $ G \cong G'$.
	\item  Graphs $H$ and $H'$ are connected and have triangular cycles.
	\item Both $H$ and $H'$ contain a node that is adjacent to all other nodes.
\end{itemize}

  Let $n=|V_H|$ and $n'=|V_{H'}|$. Through the above polynomial time construction, we can get $n$-dimensional lattice polytope $P(H)=\mathrm{conv}(\bm{0},\bm{e}_1,...,\bm{e}_{n},\bm{e}_{i_1}+\bm{e}_{j_1},...,\bm{e}_{i_t}+\bm{e}_{j_t})$ and $n'$-dimensional lattice polytope $P(H')=\mathrm{ conv}(\bm{0},\bm{e}_1,...,\bm{e}_{n'},\bm{e}_{i^{'}_{1}}+\bm{e}_{j^{'}_{1}},...,\bm{e}_{i^{'}_{r}}+\bm{e}_{j^{'}_{r}})$.  
	
	If \(G \cong G'\), then \(H \cong H'\), $n=n'$ and $t=r$. From the previous construction, the graph isomorphism $\varphi : V_{H} \to V_{H'}$ induces a bijection $\phi$ from the set $\{\bm{e}_1,...,\bm{e}_n\}$ to itself, which is represented by an $n\times n$ permutation matrix $M$.
	The edge $\{v_i, v_j\}$ in $H$ is represented by the vector $\bm{e}_i+\bm{e}_j$. Since $\varphi$
	preserves adjacency, $\{\varphi(v_i), \varphi (v_j)\}$ is an edge in $H'$, corresponding to $\bm{e}_{\varphi(i)}+\bm{e}_{\varphi(j)}=M(\bm{e}_i+\bm{e}_j)$. Thus, $\phi$ is also a bijection between the sets $\{\bm{e}_{i_1}+\bm{e}_{j_1},...,\bm{e}_{i_t}+\bm{e}_{j_t}\}$ and $\{\bm{e}_{i^{'}_{1}}+\bm{e}_{j^{'}_{1}},...,\bm{e}_{i^{'}_{r}}+\bm{e}_{j^{'}_{r}}\}$,  implying that $P(H')=MP(H)$.

	Next we give the proof of another direction. If the two instance lattice polytopes $P(H)$ and $P(H')$ constructed are unimodularly isomorphic, then $n=\mathrm{dim}(P(H))=\mathrm{dim}(P(H'))=n'$, and there is a unimodular matrix $U\in GL_n(\mathbb{Z})$ and an integer vector $\bm{Z}\in \mathbb{Z}^{n}$ such that $P(H')=UP(H)+\bm{Z}$. Thus, $t=r$ and we have 
	
	\begin{equation}\label{duiying}
		\begin{aligned}
			\mathcal{S}(P(H'))&=\{\bm{0},\bm{e}_1,...,\bm{e}_n,\bm{e}_{i^{'}_{1}}+\bm{e}_{j^{'}_{1}},...,\bm{e}_{i^{'}_{t}}+\bm{e}_{j^{'}_{t}}\}\\
			&=\{\bm{Z},U\bm{e}_1+\bm{Z},...,U\bm{e}_n+\bm{Z},U(\bm{e}_{i_1}+\bm{e}_{j_1})+\bm{Z},...,U(\bm{e}_{i_t}+\bm{e}_{j_t})+\bm{Z}\}.
		\end{aligned} 
	\end{equation}

	Let $U=(\bm{u}_1,...,\bm{u}_n)$, then $U\bm{e}_i=\bm{u}_i$ and (\ref{duiying}) becomes
	\begin{equation}\label{duiying1}
		\begin{aligned}
			\mathcal{S}(P(H'))&=\{\bm{0},\bm{e}_1,...,\bm{e}_n,\bm{e}_{i^{'}_{1}}+\bm{e}_{j^{'}_{1}},...,\bm{e}_{i^{'}_{t}}+\bm{e}_{j^{'}_{t}}\}\\
			&=\{\bm{Z},\bm{u}_1+\bm{Z},...,\bm{u}_n+\bm{Z},\bm{u}_{i_1}+\bm{u}_{j_1}+\bm{Z},...,\bm{u}_{i_t}+\bm{u}_{j_t}+\bm{Z}\}.
		\end{aligned} 
	\end{equation}
 Subtracting $\bm{Z}$ from the elements of the two sets in equation (\ref{duiying1}) yields
	\begin{equation}\label{duiying2}
		\begin{aligned}
			&\{-\bm{Z},\bm{e}_1-\bm{Z},...,\bm{e}_n-\bm{Z},\bm{e}_{i^{'}_{1}}+\bm{e}_{j^{'}_{1}}-\bm{Z},...,\bm{e}_{i^{'}_{t}}+\bm{e}_{j^{'}_{t}}-\bm{Z}\}=\\
			&\{\bm{0},\bm{u}_1,...,\bm{u}_n,\bm{u}_{i_1}+\bm{u}_{j_1},...,\bm{u}_{i_t}+\bm{u}_{j_t}\}.
		\end{aligned}
	\end{equation}

Next, we perform a case-by-case analysis of the elements in  $\{\bm{u}_1,...,\bm{u}_n\}$
and  $\{\bm{u}_{i_1}+\bm{u}_{j_1},...,\bm{u}_{i_t}+\bm{u}_{j_t}\}$. Before proceeding, we first examine the set $\mathcal{S}=\{\bm{u}_1,...,\bm{u}_n,\bm{u}_{i_1}+\bm{u}_{j_1},...,\bm{u}_{i_t}+\bm{u}_{j_t}\}$. Since $\bm{u}_1,...,\bm{u}_n$ are the column vectors of a unimodular matrix, all elements of $\mathcal{S}$ are non-zero vectors.  Additionally, each element in $\{\bm{u}_{i_1}+\bm{u}_{j_1},...,\bm{u}_{i_t}+\bm{u}_{j_t}\}$ can be expressed as the sum of two elements in $\mathcal{S}$, whereas the elements in $\{\bm{u}_1,...,\bm{u}_n\}$
cannot be represented as such a sum. \\

\vskip 0.1cm
	\textbf{case 1:} If $-\bm{Z}=\bm{0}$, we have 
	\begin{equation}\label{duiying3}
		\begin{aligned}
			\{\bm{0},\bm{e}_1,...,\bm{e}_n,\bm{e}_{i^{'}_{1}}+\bm{e}_{j^{'}_{1}},...,\bm{e}_{i^{'}_{t}}+\bm{e}_{j^{'}_{t}}\}=
			\{\bm{0},\bm{u}_1,...,\bm{u}_n,\bm{u}_{i_1}+\bm{u}_{j_1},...,\bm{u}_{i_t}+\bm{u}_{j_t}\}.
		\end{aligned}
	\end{equation}
	
 We claim $\{\bm{u}_1,...,\bm{u}_n\}=\{\bm{e}_1,...,\bm{e}_n\}$ and $\{\bm{u}_{i_1}+\bm{u}_{j_1},...,\bm{u}_{i_t}+\bm{u}_{j_t}\}=\{\bm{e}_{i^{'}_{1}}+\bm{e}_{j^{'}_{1}},...,\bm{e}_{i^{'}_{t}}+\bm{e}_{j^{'}_{t}}\}$. Otherwise, there will be some $\bm{u}_i=\bm{e}_{i^{'}_{k}}+\bm{e}_{j^{'}_{k}}$ and some $\bm{e}_j=\bm{u}_{i_m}+\bm{u}_{j_m}$, where $i,j\in\{1,...,n\}$, $k,m \in\{1,...,t\}$. According to (\ref{duiying3}), $\{\bm{u}_1,...,\bm{u}_n\}$ is a subset of the set $\{\bm{e}_1,...,\bm{e}_n,\bm{e}_{i^{'}_{1}}+\bm{e}_{j^{'}_{1}},...,\bm{e}_{i^{'}_{t}}+\bm{e}_{j^{'}_{t}}\}$. This indicates that the vector $\bm{e}_j$ can be expressed as the sum of two vectors in $\{\bm{e}_1,...,\bm{e}_n,\bm{e}_{i^{'}_{1}}+\bm{e}_{j^{'}_{1}},...,\bm{e}_{i^{'}_{t}}+\bm{e}_{j^{'}_{t}}\}$, which is impossible. Therefore, in this case,  it follows that $\{\bm{u}_1,...,\bm{u}_n\}=\{\bm{e}_1,...,\bm{e}_n\}$ and $\{\bm{u}_{i_1}+\bm{u}_{j_1},...,\bm{u}_{i_t}+\bm{u}_{j_t}\}=\{\bm{e}_{i^{'}_{1}}+\bm{e}_{j^{'}_{1}},...,\bm{e}_{i^{'}_{t}}+\bm{e}_{j^{'}_{t}}\}$.\\
 
	\vskip 0.1cm
 \textbf{case 2:} If $\bm{e}_i-\bm{Z}=\bm{0}$, $i\in \{1,...,n\}$, then (\ref{duiying2}) becomes 
	\begin{equation}\label{case2}
		\begin{aligned}
			&\{-\bm{e}_i,\bm{e}_1-\bm{e}_i,...,\bm{0},...,\bm{e}_n-\bm{e}_i,\bm{e}_{k_1},...,\bm{e}_{k_d},\bm{e}_{p_{1}}+\bm{e}_{q_{1}}-\bm{e}_i,...,\bm{e}_{p_{s}}+\bm{e}_{q_{s}}-\bm{e}_i\}=\\
			&\{\bm{0},\bm{u}_1,...,\bm{u}_n,\bm{u}_{i_1}+\bm{u}_{j_1},...,\bm{u}_{i_t}+\bm{u}_{j_t}\},   
		\end{aligned}
	\end{equation}
	where $d+s=t$ and the vectors \( \bm{e}_{k_1}, \ldots, \bm{e}_{k_d} \) correspond to nodes adjacent to the node associated with \( \bm{e}_i \). Let \( \bm{e}_{z_1}, \ldots, \bm{e}_{z_m} \) correspond to the nodes non-adjacent to this node, so that $d+m=n-1$. Since $H'$ is connected, we have $1\leqslant d \leqslant n-1$ and $0\leqslant m \leqslant n-2$. 

	If $m=0$, we can obtain
	\begin{equation}\label{case21}
		\begin{aligned}
			&\{-\bm{e}_i,\bm{e}_{k_1}-\bm{e}_i,...,\bm{e}_{k_{n-1}}-\bm{e}_i, \bm{e}_{k_1},...,\bm{e}_{k_{n-1}},\bm{e}_{p_{1}}+\bm{e}_{q_{1}}-\bm{e}_i,...,\bm{e}_{p_{s}}+\bm{e}_{q_{s}}-\bm{e}_i\}\\
			&=\{\bm{u}_1,...,\bm{u}_n,\bm{u}_{i_1}+\bm{u}_{j_1},...,\bm{u}_{i_t}+\bm{u}_{j_t}\}.   
		\end{aligned}
	\end{equation}
 For the left-hand side, the elements $-\bm{e}_i,\bm{e}_{k_1},...,\bm{e}_{k_{n-1}}$ cannot be expressed as the sum of two elements in this set, so $\{-\bm{e}_i, \bm{e}_{k_1},...,\bm{e}_{k_{n-1}}\}\subseteq \{\bm{u}_1,...,\bm{u}_n\}$. Since both sets have cardinality $n$, it follows that
	$\{\bm{u}_1,...,\bm{u}_n\}=\{-\bm{e}_i,\bm{e}_{k_1},...,\bm{e}_{k_{n-1}}\}$. By the presence of triangular cycles in $H'$, the left-hand side of (\ref{case21}) contains $\bm{e}_{k_x}+\bm{e}_{k_y}-\bm{e}_i$ ($x,y\in \{1,...,n-1\}$), which must lie in $\{\bm{u}_{i_1}+\bm{u}_{j_1},...,\bm{u}_{i_t}+\bm{u}_{j_t}\}$. However, this element cannot be decomposed into two elements of $\{\bm{u}_1,...,\bm{u}_n\}$, contradicting (\ref{case21}). 

If $1\leqslant m \leqslant n-2$, we have
	\begin{equation}\label{case22}
		\begin{aligned}
			&\{-\bm{e}_i,\bm{e}_{z_1}-\bm{e}_i,...,\bm{e}_{z_m}-\bm{e}_i,\bm{e}_{k_1}-\bm{e}_i,...,\bm{e}_{k_d}-\bm{e}_i,\bm{e}_{k_1},...,\bm{e}_{k_d},\bm{e}_{p_{1}}+\bm{e}_{q_{1}}-\bm{e}_i,...,\bm{e}_{p_{s}}+\bm{e}_{q_{s}}-\bm{e}_i\}\\
			&=\{\bm{u}_1,...,\bm{u}_n,\bm{u}_{i_1}+\bm{u}_{j_1},...,\bm{u}_{i_t}+\bm{u}_{j_t}\}.   
		\end{aligned}
	\end{equation}
 Here, the elements $-\bm{e}_i,\bm{e}_{z_1}-\bm{e}_i,...,\bm{e}_{z_m}-\bm{e}_i,\bm{e}_{k_1},...,\bm{e}_{k_d}$ cannot be expressed as the sum of two elements in the set, implying $\{-\bm{e}_i,\bm{e}_{z_1}-\bm{e}_i,...,\bm{e}_{z_m}-\bm{e}_i,\bm{e}_{k_1},...,\bm{e}_{k_d}\}\subseteq \{\bm{u}_1,...,\bm{u}_n\}$. With both sets of size $n$, we get
  $\{\bm{u}_1,...,\bm{u}_n\}=\{-\bm{e}_i,\bm{e}_{z_1}-\bm{e}_i,...,\bm{e}_{z_m}-\bm{e}_i,\bm{e}_{k_1},...,\bm{e}_{k_d}\}$. Triangular cycles in $H'$ introduce elements like $\bm{e}_{k_x}+\bm{e}_{k_y}-\bm{e}_i$ ( $x,y\in \{1,...,d\}$) or $\bm{e}_{z_{x}}+\bm{e}_{z_{y}}-\bm{e}_i$ ($x,y\in \{1,...,m\}$) to the left-hand side of (\ref{case22}), which must belong to $\{\bm{u}_{i_1}+\bm{u}_{j_1},...,\bm{u}_{i_t}+\bm{u}_{j_t}\}$. However, these elements cannot be written as the sum of two elements in $\{\bm{u}_1,...,\bm{u}_n\}$, contradicting (\ref{case22}). Thus, case 2 is impossible.\\
 
	\vskip 0.1cm
 \textbf{case 3:} Suppose $\bm{e}_{i^{'}_k}+\bm{e}_{j^{'}_k}-\bm{Z}=\bm{0}$, $k \in \{1,...,t\}$. For convenience, we write $\bm{e}_{i^{'}_k}+\bm{e}_{j^{'}_k}$ as $\bm{e}_i+\bm{e}_j$ and $i'_k=i$, $j'_k=j$.
 This implies that the node associated with $\bm{e}_i$ is adjacent to the node associated with $\bm{e}_j$	
in the graph $H'$. For notational simplicity, we hereafter refer to $\bm{e}_i$ and $\bm{e}_j$ as adjacent and continue to use this notation. Furthermore, (\ref{duiying2}) becomes
	\begin{equation}\label{case3}
		\begin{aligned}
			&\{-\bm{e}_i-\bm{e}_j,\bm{e}_{z_1}-\bm{e}_i-\bm{e}_j,...,\bm{e}_{z_{n-2}}-\bm{e}_i-\bm{e}_j,-\bm{e}_i,-\bm{e}_j,\bm{e}_{k_1}-\bm{e}_{i},...,\bm{e}_{k_s}-\bm{e}_{i},\bm{e}_{h_1}-\bm{e}_{j},...,\\ 
			&
			\bm{e}_{h_m}-\bm{e}_{j},\bm{e}_{p_{1}}+\bm{e}_{q_{1}}-\bm{e}_i-\bm{e}_j,...,\bm{e}_{p_{l}}+\bm{e}_{q_{l}}-\bm{e}_i-\bm{e}_j\}=\{\bm{u}_1,...,\bm{u}_n,\bm{u}_{i_1}+\bm{u}_{j_1},...,\bm{u}_{i_t}+\bm{u}_{j_t}\},  
		\end{aligned}
	\end{equation}
	where $s+m+l=t$, $\bm{e}_{z_1},...,\bm{e}_{z_{n-2}}$ are the $n-2$ elements in the set $\{\bm{e}_{1},...,\bm{e}_{n}\}$ except for $\bm{e}_i$ and $\bm{e}_j$; $\bm{e}_{k_1},...,\bm{e}_{k_s}$ are adjacent to \( \bm{e}_j \), and $\bm{e}_{h_1},..,\bm{e}_{h_m}$ are adjacent to \( \bm{e}_i \). Additionally, $\{\bm{e}_{k_1},...,\bm{e}_{k_s}\}\subseteq \{\bm{e}_{z_1},...,\bm{e}_{z_{n-2}}\}$ and $\{\bm{e}_{h_1},..,\bm{e}_{h_m}\} \subseteq\{\bm{e}_{z_1},...,\bm{e}_{z_{n-2}}\}$. Since graph $H'$ is connected, $\{\bm{e}_{k_1},...,\bm{e}_{k_s}\}$ and $\{\bm{e}_{h_1},..,\bm{e}_{h_m}\}$ cannot both be empty. Next, we will discuss separately based on the adjacency of the elements in set $\{\bm{e}_{z_1},...,\bm{e}_{z_{n-2}}\}$ to both $\bm{e}_i$ and $\bm{e}_j$.

	If each element in the set $\{\bm{e}_{z_1},...,\bm{e}_{z_{n-2}}\}$ is adjacent to either $\bm{e}_i$ or $\bm{e}_j$, it follows that $\{\bm{e}_{z_1},...,\bm{e}_{z_{n-2}}\}\subseteq\{\bm{e}_{k_1},...,\bm{e}_{k_s}, \bm{e}_{h_1},..,\bm{e}_{h_m}\}$. 
	
 \begin{itemize}
 \item Suppose $\{\bm{e}_{z_1},...,\bm{e}_{z_{n-2}}\}=\{\bm{e}_{h_1},..,\bm{e}_{h_m}\}$ and $\{\bm{e}_{k_1},...,\bm{e}_{k_s}\}=\emptyset$. This implies that every element in $\{\bm{e}_{z_1},...,\bm{e}_{z_{n-2}}\}$ is adjacent to $\bm{e}_i$ but not to $\bm{e}_j$. In this case, $\bm{e}_j$ is adjacent only to $\bm{e}_i$, and we have
 
 \begin{equation}\label{case31}
 	\begin{aligned}
 		&\{-\bm{e}_i-\bm{e}_j,\bm{e}_{z_1}-\bm{e}_i-\bm{e}_j,...,\bm{e}_{z_{n-2}}-\bm{e}_i-\bm{e}_j,-\bm{e}_i,-\bm{e}_j, \bm{e}_{z_1}-\bm{e}_{j},...,
 		\bm{e}_{z_{n-2}}-\bm{e}_{j},\\
 		&\bm{e}_{p_{1}}+\bm{e}_{q_{1}}-\bm{e}_i-\bm{e}_j,...,\bm{e}_{p_{l}}+\bm{e}_{q_{l}}-\bm{e}_i-\bm{e}_j\}=\{\bm{u}_1,...,\bm{u}_n,\bm{u}_{i_1}+\bm{u}_{j_1},...,\bm{u}_{i_t}+\bm{u}_{j_t}\}. 
 	\end{aligned}
 \end{equation}
 For the set on the left-hand side of (\ref{case31}), the elements $-\bm{e}_i,-\bm{e}_j,\bm{e}_{z_1}-\bm{e}_{j},...,\bm{e}_{z_{n-2}}-\bm{e}_{j}$ cannot be expressed as the sum of any two elements within this set. It follows that $\{\bm{u}_1,...,\bm{u}_n\}=\{-\bm{e}_i,-\bm{e}_j,\bm{e}_{z_1}-\bm{e}_{j},...,\bm{e}_{z_{n-2}}-\bm{e}_{j}\}$.  Given the presence of triangular cycles in graph $H'$, there will be the element $\bm{e}_{z_x}+\bm{e}_{z_y}-\bm{e}_i-\bm{e}_j$ (with $x,y\in \{1,...,n-2\}$) in the set on the left-hand side of (\ref{case31}), and it belongs to the set $\{\bm{u}_{i_1}+\bm{u}_{j_1},...,\bm{u}_{i_t}+\bm{u}_{j_t}\}$. However, this element cannot be decomposed into the sum of two elements in $\{\bm{u}_1,...,\bm{u}_n\}$, contradicting the equality in (\ref{case31}). Thus, the case where $\{\bm{e}_{z_1},...,\bm{e}_{z_{n-2}}\}=\{\bm{e}_{h_1},...,\bm{e}_{h_m}\}$ and $\{\bm{e}_{k_1},...,\bm{e}_{k_s}\}=\emptyset$ is impossible. By symmetry, the case where $\{\bm{e}_{z_1},...,\bm{e}_{z_{n-2}}\}=\{\bm{e}_{k_1},...,\bm{e}_{k_s}\}$ and $\{\bm{e}_{h_1},...,\bm{e}_{h_m}\}=\emptyset$ is also ruled out. 
 \item  Suppose $\{\bm{e}_{z_1},...,\bm{e}_{z_{n-2}}\}\subseteq\{\bm{e}_{k_1},...,\bm{e}_{k_s}, \bm{e}_{h_1},..,\bm{e}_{h_m}\}$ with $\{\bm{e}_{h_1},..,\bm{e}_{h_m}\} \neq \emptyset$ and $\{\bm{e}_{k_1},...,\bm{e}_{k_s}\}\neq\emptyset$. This scenario decomposes into two subcases:
 \begin{itemize}
 	\item [(i).] If $\{\bm{e}_{z_1},...,\bm{e}_{z_{n-2}}\}\subseteq\{\bm{e}_{k_1},...,\bm{e}_{k_s}, \bm{e}_{h_1},..,\bm{e}_{h_m}\}$ and $\{\bm{e}_{h_1},..,\bm{e}_{h_m}\} \cap \{\bm{e}_{k_1},...,\bm{e}_{k_s}\}=\emptyset$, it follows that elements of 
 	  $\{\bm{e}_{z_1},...,\bm{e}_{z_{n-2}}\}$ are partitioned into those adjacent to $\bm{e}_i$ and those adjacent to $\bm{e}_j$, with no overlap. This is impossible: graph $H'$ contains a node adjacent to all other nodes, contradicting the partition.
 	
 	\item [(ii).] If $\{\bm{e}_{z_1},...,\bm{e}_{z_{n-2}}\}\subseteq\{\bm{e}_{k_1},...,\bm{e}_{k_s}, \bm{e}_{h_1},..,\bm{e}_{h_m}\}$ and $\{\bm{e}_{h_1},..,\bm{e}_{h_m}\} \cap \{\bm{e}_{k_1},...,\bm{e}_{k_s}\}\neq \emptyset$, there exist elements in $\{\bm{e}_{z_1},...,\bm{e}_{z_{n-2}}\}$ adjacent to both $\bm{e}_i$ and $\bm{e}_j$. Consider equation \eqref{case3}: the left-hand side set includes elements $-\bm{e}_i,-\bm{e}_j,\bm{e}_{k_1}-\bm{e}_{i},...,\bm{e}_{k_s}-\bm{e}_{i},\bm{e}_{h_1}-\bm{e}_{j},...,
 	\bm{e}_{h_m}-\bm{e}_{j}$, none of which can be expressed as the sum of two elements in the set. The count of such elements exceeds $n$, contradicting the cardinality of $\{\bm{u}_1,...,\bm{u}_n\}$ in \eqref{case3}. Thus, this subcase is impossible.
 	\end{itemize}
 \end{itemize}
	 
If there exist elements in $\{\bm{e}_{z_1},...,\bm{e}_{z_{n-2}}\}$ that are neither adjacent to $\bm{e}_i$ nor to $\bm{e}_j$, denote the set of such elements by $\{\bm{e}_{z''_{1}},...,\bm{e}_{z''_{w}}\}$. Let  $\{\bm{e}_{z_1},...,\bm{e}_{z_{n-2}}\}\setminus \{\bm{e}_{z''_{1}},...,\bm{e}_{z''_{w}}\}=\{\bm{e}_{z^{'}_{1}},...,\bm{e}_{z^{'}_{d}}\}$, where each element of $\{\bm{e}_{z'_1},...,\bm{e}_{z'_{d}}\}$ is adjacent to either $\bm{e}_i$ or $\bm{e}_j$, and $\{\bm{e}_{z^{'}_{1}},...,\bm{e}_{z^{'}_{d}}\}\neq \emptyset$. It follows that $\{\bm{e}_{k_1},...,\bm{e}_{k_s}\}\subseteq \{\bm{e}_{z'_1},...,\bm{e}_{z'_{d}}\}$,\  $\{\bm{e}_{h_1},..,\bm{e}_{h_m}\} \subseteq\{\bm{e}_{z'_1},...,\bm{e}_{z'_{d}}\}$, and $\{\bm{e}_{z'_1},...,\bm{e}_{z'_{d}}\}\subseteq \{\bm{e}_{k_1},...,\bm{e}_{k_s},\bm{e}_{h_1},..,\bm{e}_{h_m}\}$.
Similarly, we discuss separately based on the adjacency of the elements in set $\{\bm{e}_{z'_1},...,\bm{e}_{z'_{d}}\}$ to both $\bm{e}_i$ and $\bm{e}_j$.

Since graph $H'$ contains a node that is adjacent to all other nodes, the following three cases are ruled out. 
\begin{itemize}
	\item [(i).] $\{\bm{e}_{z'_1},...,\bm{e}_{z'_{d}}\}=\{\bm{e}_{h_1},..,\bm{e}_{h_m}\}$ and $\{\bm{e}_{k_1},...,\bm{e}_{k_s}\}=\emptyset$.
	\item [(ii).] $\{\bm{e}_{z'_1},...,\bm{e}_{z'_{d}}\}=\{\bm{e}_{k_1},...,\bm{e}_{k_s}\}$ and $\{\bm{e}_{h_1},..,\bm{e}_{h_m}\}=\emptyset$.
	\item [(iii).] $\{\bm{e}_{h_1},..,\bm{e}_{h_m}\} \neq \emptyset$, $\{\bm{e}_{k_1},...,\bm{e}_{k_s}\}\neq\emptyset$, and $\{\bm{e}_{h_1},..,\bm{e}_{h_m}\} \cap \{\bm{e}_{k_1},...,\bm{e}_{k_s}\}=\emptyset$.
\end{itemize} 

Now, we only consider the following case.

	If $\{\bm{e}_{z'_1},...,\bm{e}_{z'_{d}}\}\subseteq\{\bm{e}_{k_1},...,\bm{e}_{k_s}, \bm{e}_{h_1},..,\bm{e}_{h_m}\}$ and $\{\bm{e}_{h_1},..,\bm{e}_{h_m}\} \cap \{\bm{e}_{k_1},...,\bm{e}_{k_s}\}\neq \emptyset$, there exist elements in  $\{\bm{e}_{z'_1},...,\bm{e}_{z'_{d}}\}$ adjacent to both $\bm{e}_i$ and $\bm{e}_j$. Consider the equation
	\begin{equation}\label{case32}
		\begin{aligned}
			&\{-\bm{e}_i-\bm{e}_j,\bm{e}_{z'_1}-\bm{e}_i-\bm{e}_j,...,\bm{e}_{z'_{d}}-\bm{e}_i-\bm{e}_j,\bm{e}_{z''_1}-\bm{e}_i-\bm{e}_j,...,\bm{e}_{z''_{w}}-\bm{e}_i-\bm{e}_j,-\bm{e}_i,-\bm{e}_j,\\
			&\bm{e}_{k_1}-\bm{e}_{i},...,\bm{e}_{k_s}-\bm{e}_{i},\bm{e}_{h_1}-\bm{e}_{j},...,
			\bm{e}_{h_m}-\bm{e}_{j},\bm{e}_{p_{1}}+\bm{e}_{q_{1}}-\bm{e}_i-\bm{e}_j,...,\bm{e}_{p_{l}}+\bm{e}_{q_{l}}-\bm{e}_i-\bm{e}_j\}\\
			&=\{\bm{u}_1,...,\bm{u}_n,\bm{u}_{i_1}+\bm{u}_{j_1},...,\bm{u}_{i_t}+\bm{u}_{j_t}\},  
		\end{aligned}
	\end{equation}	
In the left-hand side of (\ref{case32}), the elements $-\bm{e}_i,-\bm{e}_j,\bm{e}_{k_1}-\bm{e}_{i},...,\bm{e}_{k_s}-\bm{e}_{i},\bm{e}_{h_1}-\bm{e}_{j},...,
		\bm{e}_{h_m}-\bm{e}_{j}, \bm{e}_{z''_1}-\bm{e}_i-\bm{e}_j,...,\bm{e}_{z''_{w}}-\bm{e}_i-\bm{e}_j$ cannot be expressed as the sum of any two elements in the set. Since the cardinality of these elements exceeds $n$, the equality in \eqref{case32} fails, ruling out this case.

	In summary, if the polytopes $P(H)$ and $P(H')$ are unimodularly isomorphic, then $\bm{Z}=\bm{0}$ and $\{\bm{u}_1,...,\bm{u}_n\}=\{\bm{e}_1,...,\bm{e}_n\}$, $\{\bm{u}_{i_1}+\bm{u}_{j_1},...,\bm{u}_{i_t}+\bm{u}_{j_t}\}=\{\bm{e}_{i^{'}_{1}}+\bm{e}_{j^{'}_{1}},...,\bm{e}_{i^{'}_{t}}+\bm{e}_{j^{'}_{t}}\}$. This implies
	\begin{gather*}
		\{\bm{u}_1,...,\bm{u}_n\}=\{U\bm{e}_1,...,U\bm{e}_n\}=\{\bm{e}_1,...,\bm{e}_n\},\\
		\{\bm{u}_{i_1}+\bm{u}_{j_1},...,\bm{u}_{i_t}+\bm{u}_{j_t}\}=\{U(\bm{e}_{i_1}+\bm{e}_{j_1}),...,U(\bm{e}_{i_t}+\bm{e}_{j_t})\}=\{\bm{e}_{i^{'}_{1}}+\bm{e}_{j^{'}_{1}},...,\bm{e}_{i^{'}_{t}}+\bm{e}_{j^{'}_{t}}\},
	\end{gather*}
	indicating that there exists a permutation of the (node) set $\{\bm{e}_1,...,\bm{e}_n\}$ inducing a bijection between$\{\bm{e}_{i_1}+\bm{e}_{j_1},...,\bm{e}_{i_t}+\bm{e}_{j_t}\}$ and $\{\bm{e}_{i^{'}_{1}}+\bm{e}_{j^{'}_{1}},...,\bm{e}_{i^{'}_{t}}+\bm{e}_{j^{'}_{t}}\}$. Specifically, $\bm{e}_{i_k}+\bm{e}_{j_k}$ is an edge of $H$ if and only if $U(\bm{e}_{i_k}+\bm{e}_{j_k})$ is an edge of $H'$. Hence, graphs  $H$ and $H'$ are isomorphic, and so are $G$ and $G'$.
\end{proof}
The above theorem indicates that the unimodular isomorphism problem of convex lattice polytopes is graph isomorphism hard.

\section{Statistical Zero-Knowledge Proof of Knowledge}
In this section, analogously to the protocol for lattice isomorphism \cite{Ducas}, we present a statistical zero-knowledge proof system for unimodular isomorphism of lattice polytopes. To do
so, we first define our average-case distribution within a unimodular isomorphism class $[P]$ and show how to sample from it.
\subsection{An Average-Case Distribution}
First, for any $n$-dimensional convex lattice polytope $P\subset \mathbb{Z}^{n}$, we define the average-case distribution over the unimodular isomorphism class $[P]$. This work uses Gaussian sampling \cite{Gentry} and techniques analogous to those in \cite{Ducas}. As we will see in the subsequent proof, the Gaussian sampling ensures that the output distribution depends solely  on the isomorphism class $[P]$, and not on the specific input lattice polytope. Without loss of generality, we can use $V_{P}^{ord}$ to represent $P$, where $V_{P}^{ord}$ is the vertex matrix obtained by arranging the column vectors of $V_P$ in lexicographic ascending order. Each $n$-dimensional lattice polytope $P\subset \mathbb{Z}^{n}$ has a unique vertex matrix $V^{ord}_{P}$, such that distinct matrices $V^{ord}_{P}$ and $V^{ord}_{P'}$ correspond to distinct convex lattice polytopes $P$ and $P'$.

We begin with the linear algebra procedure. Given the vertex matrix $V^{ord}_{P}\in \mathbb{Z}^{n\times d}$ of an $n$-dimensional convex lattice polytope $P\subset \mathbb{Z}^{n}$, a set of linearly independent vectors $Y:=(\bm{y}_1,...,\bm{y}_n)^{t}\in \mathbb{Z}^{n\times n}$, and an integer vector 
 $\bm{z}\in \mathbb{Z}^{n}$, this procedure returns a convex lattice polytope $R\in [P]$. Here, $\bm{y}_1,...,\bm{y}_n$ and $\bm{z}$ are sampled from the Gaussian distribution $D_{Q,s}$, where $Q=(V_{P}^{ord}-b_P)(V_{P}^{ord}-b_P)^{t}$ and $b_{P}$ denotes the vertex average of $P$. The detailed steps of this process are provided below.

Firstly, we use the polynomial time algorithm in Lemma \ref{polytime} to obtain a unimodular matrix and a convex polytope $P_1$.
\begin{lem}\label{polytime}
	There is a polynomial time
	algorithm $(V_{P_1},U)\leftarrow \bm{\mathrm{PolyTime}}(V^{ord}_{P},Y)$ that on input a vertex matrix $V^{ord}_{P}=(\bm{v}_1,...,\bm{v}_d)$, and a set of linearly independent vectors $Y:=(\bm{y}_1,...,\bm{y}_n)^{t}\in \mathbb{Z}^{n\times n}$, outputs a transformation $U\in GL_{n}(\mathbb{Z})$ and a vertex matrix $V_{P_1}=U(V^{ord}_{P}-b_{P})$ of the $n$-dimensional convex polytope $P_1=U(P-b_P)$.
\end{lem}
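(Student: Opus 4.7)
My plan is to realize $\bm{\mathrm{PolyTime}}$ as three elementary phases, each clearly polynomial in the bit-length of the inputs. First, from the columns $\bm{v}_1,\ldots,\bm{v}_d$ of $V^{ord}_{P}$ I compute the vertex average $b_P = \frac{1}{d}\sum_{j=1}^{d}\bm{v}_j \in \mathbb{Q}^n$ via $O(nd)$ rational additions and one scalar division. Second, from the integer matrix $Y$, whose rows are linearly independent so that $\det Y \neq 0$, I extract a unimodular matrix $U \in GL_n(\mathbb{Z})$ by computing a (row) Hermite Normal Form factorization $UY = H$ with $H$ upper triangular; a standard deterministic polynomial-time HNF algorithm (for instance Kannan--Bachem or Storjohann) returns $U$ and $H$ simultaneously, with bit-length polynomial in that of $Y$. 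Third, I form $V_{P_1} := U(V^{ord}_{P} - b_P)$ by subtracting $b_P$ from every column of $V^{ord}_{P}$ and left-multiplying by $U$, which costs $O(n^2 d)$ rational operations on entries of polynomial bit-length.

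For correctness, I observe that the map $\mathcal{A}\colon \mathbb{R}^n\to\mathbb{R}^n$ defined by $\mathcal{A}(\bm{x}) := U(\bm{x} - b_P)$ is an affine bijection, since $U$ is invertible. Consequently its image $P_1 = \mathcal{A}(P) = U(P - b_P)$ is a convex polytope whose vertex set equals $\mathcal{A}(\mathcal{V}(P))$, and the columns of $V_{P_1}$ are precisely the images $U(\bm{v}_j - b_P)$ of the vertices of $P$. Hence $V_{P_1}$ is indeed a vertex matrix of $P_1$, and unimodularity of $U$ is guaranteed by the HNF procedure. Note that $P_1$ need not be a lattice polytope, since $b_P$ may fail to lie in $\mathbb{Z}^n$; the lemma only asserts that $P_1$ is a convex polytope, so this causes no difficulty, and the translation by some $\bm{z}\in\mathbb{Z}^n$ that produces an element of $[P]$ is handled outside the scope of this lemma.

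The only genuine obstacle is the polynomial-time bound in the second phase: naive integer row reduction on $Y$ can produce intermediate entries of exponential bit-length, even when both $Y$ itself and the final unimodular factor $U$ are small. The crucial step is therefore the appeal to a polynomial-time HNF algorithm (such as Storjohann's) that keeps every intermediate and output value within a polynomial bit-length of the input. Once that ingredient is cited, the remaining phases consist of elementary rational arithmetic and the total cost is manifestly polynomial in $n$, $d$, and the bit-length of the entries of $V^{ord}_P$ and $Y$.
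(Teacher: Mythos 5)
Your proof is essentially the same as the paper's: compute $b_P$, obtain a canonical unimodular $U$ from $Y$ via a polynomial-time Hermite Normal Form computation, then form $V_{P_1}=U(V^{ord}_P-b_P)$. The remark that $P_1$ need not be a lattice polytope (because $b_P$ can be non-integral) is accurate and worth keeping.

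One detail differs and is worth flagging. You use the row HNF convention, $UY=H$ with $H$ upper triangular, so that $U$ records unimodular \emph{row} operations on $Y$. The paper instead takes $U$ to be the unimodular matrix with $T=YU^{-1}$ the canonical lower-triangular HNF, i.e.\ $U$ records \emph{column} operations, so that $U$ is an invariant of the column lattice of $Y$. Either choice satisfies the literal wording of this lemma (any deterministic, polynomial-time assignment $Y\mapsto U\in GL_n(\mathbb{Z})$ does), but the two conventions behave differently under the substitution $Y\mapsto Y'=YV^{-1}$ that occurs in the later well-definedness argument for $D_s([P])$. With the column convention one gets $Y'(U')^{-1}=Y'U^{-1}\,V\,V^{-1}\cdots$ — more precisely, the column lattice of $Y'$ equals that of $Y$, so $T'=T$ and hence $U'=UV^{-1}$, which is exactly what the paper uses. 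With your row convention, the row lattice of $Y'=YV^{-1}$ is $(V^{-1})^t$ applied to that of $Y$, so $H'\neq H$ in general and the identity $U'=UV^{-1}$ fails. So if you carried your version forward, the proof that $D_s([P])$ is independent of the representative of $[P]$ would break; you would want to switch to the column HNF (or argue the equivariance some other way).
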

\begin{proof}
	For the input vertex matrix $V^{ord}_{P}$, it is easy to obtain $b_P$ and $(\bm{v}_1-b_{P},...,\bm{v}_d-b_{P})$. For the integer matrix $Y=(\bm{y}_1,...,\bm{y}_n)^{t}$,  there exists the unique transformation $U\in GL_{n}(\mathbb{Z})$ such that $T=YU^{-1}$ is the canonical lower triangular Hermite Normal Form of $Y$. In addition, the Hermite Normal Form of an integer matrix and the corresponding unimodular transformation can be computed in polynomial time \cite{Cohen}.
\end{proof}
Secondly, by arranging the column vectors of the vertex matrix $V_{P_1}$ in lexicographical ascending order, we can obtain $V_{P_{1}}^{ord}=(\bm{v}_{1}^{ord},...,\bm{v}_{d}^{ord})=U(\bm{v}_1-b_{P},...,\bm{v}_d-b_{P})M$, for some permutation matrix $M$. We denote this lexicographical ordering process as \textbf{LexiOrder()}.

Thirdly, by translating $P_1$ by $\bm{v}_{1}^{ord}$, we can obtain the vertex matrix $V_{P_2}^{ord}$ of the $n$-dimensional convex polytope $P_2$, where $V_{P_2}^{ord}=(\bm{0},...,\bm{v}_{d}^{ord}-\bm{v}_{1}^{ord})$, and $\bm{v}_{1}^{ord}=U\bm{v}_{i}-Ub_{P}$ for some $i \in \{1, 2, ..., d\}$.

Finally, we can obtain the vertex matrix of the $n$-dimensional convex polytope R as follows.
\begin{eqnarray}\label{R}
	\begin{aligned}
		V^{ord}_{R}&= V_{P_2}^{ord}+(U^{-1})^{t}\bm{z}\\
		&=(\bm{0},...,\bm{v}_{d}^{ord}-\bm{v}_{1}^{ord})+((U^{-1})^{t}\bm{z},...,(U^{-1})^{t}\bm{z})\\
		&=(\bm{v}_{1}^{ord},...,\bm{v}_{d}^{ord})+((U^{-1})^{t}\bm{z}-\bm{v}_{1}^{ord},...,(U^{-1})^{t}\bm{z}-\bm{v}_{1}^{ord})\\
		&=U(\bm{v}_1,...,\bm{v}_d)M+((U^{-1})^{t}\bm{z}-\bm{v}_{1}^{ord}-Ub_{P},...,(U^{-1})^{t}\bm{z}-\bm{v}_{1}^{ord}-Ub_{P})\\
		&=U(\bm{v}_1,...,\bm{v}_d)M+((U^{-1})^{t}\bm{z}-U\bm{v}_{i},...,(U^{-1})^{t}\bm{z}-U\bm{v}_{i}). 
	\end{aligned}   
\end{eqnarray}
Since the vector $\bm{Z}=(U^{-1})^{t}\bm{z}-U\bm{v}_{i}\in \mathbb{Z}^{n}$, then $R$
is an $n$-dimensional convex lattice polytope and $R=UP+\bm{Z}$.

We summarize the above process as the following Algorithm 1.
\begin{algorithm}[!h]\label{extract}
	\caption{$(V^{ord}_{R},U,\bm{Z})\leftarrow \bm{\mathrm{Extract}}(V^{ord}_{P},Y,\bm{z})$.}
	
	\KwData{The vertex matrix $V^{ord}_{P}\in \mathbb{Z}^{n\times d}$ of an $n$-dimensional convex lattice polytope $P\subset \mathbb{Z}^{n}$, a set of linearly independent vectors
		$Y=(\bm{y}_1,...,\bm{y}_n)^{t}\in \mathbb{Z}^{n\times n}$, and  an integer vector $\bm{z}\in \mathbb{Z}^{n}$.}
	\KwResult{ The vertex matrix $V^{ord}_{R}\in \mathbb{Z}^{n\times d}$ of $R=UP+\bm{Z}$, with a transformation $U\in GL_{n}(\mathbb{Z})$
		and an integer vector $\bm{Z}\in \mathbb{Z}^{n}$.}
	$(V_{P_1},U)\leftarrow \bm{\mathrm{PolyTime}}(V^{ord}_{P},Y)$;\\
	$V_{P_{1}}^{ord}\leftarrow \bm{\mathrm{LexiOrder}}(V_{P_1})$;\\
	$V_{P_2}^{ord}\leftarrow V_{P_{1}}^{ord}-\bm{v}_{1}^{ord}$;\\
	$\bm{\mathrm{Return}}:V_{R}^{ord}=V_{P_2}^{ord}+(U^{-1})^{t}\bm{z}$, $U$ and the integer vector
	$\bm{Z}=(U^{-1})^{t}\bm{z}-\bm{v}_{1}^{ord}-Ub_{P}$;
\end{algorithm}

In the second step, the lexicographical ordering of the column vectors of \( V_{P_1} \in \mathbb{R}^{n \times d} \) can be achieved in \( O(nd \log d) \) time by comparing entries row-wise, which is polynomial in the input size (since \( d \) denotes the number of vertices and \( n \) the dimension). Combined with the polynomial-time guarantees of \( \bm{\mathrm{PolyTime}} \) and other linear operations, we derive the following lemma.

\begin{lem}\label{polytime1}
	The running time of Algorithm 1 is polynomial with respect to the parameters \( n \) (dimension) and \( d \) (number of vertices).
\end{lem}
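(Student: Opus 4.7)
The plan is to bound each of the four steps of Algorithm 1 individually, show each runs in time polynomial in $n$ and $d$, and then sum the bounds.

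Step 1 is handled directly by Lemma \ref{polytime}: the call $\bm{\mathrm{PolyTime}}(V^{ord}_P, Y)$ runs in polynomial time because it reduces to computing the Hermite Normal Form of $Y \in \mathbb{Z}^{n\times n}$ together with the associated unimodular transformation, both polynomial-time by \cite{Cohen}. Step 2, the lexicographic sorting of the $d$ columns of $V_{P_1}$, will be done by any comparison-based sort: there are $O(d\log d)$ comparisons, each requiring at most $O(n)$ entry comparisons, for a total of $O(nd\log d)$ arithmetic operations. Step 3 subtracts a single fixed vector from each of the $d$ columns and costs $O(nd)$ operations. Step 4 requires computing $U^{-1}$, then forming $(U^{-1})^{t}\bm{z}$ together with the various translations of the columns of $V_{P_2}^{ord}$; integer matrix inversion of an $n \times n$ matrix is polynomial in $n$ (via Gaussian elimination with fraction-free pivoting, or again via HNF), and the remaining additions cost $O(nd)$.

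The main subtle point, and in fact the only real obstacle, is controlling the \emph{bit-length} of intermediate entries: a polynomial arithmetic-operation count only yields polynomial running time if each operand has polynomially bounded bit-length. Here we exploit that $U$ is unimodular, so $|\det U| = 1$, and Cramer's rule combined with Hadamard's bound forces the entries of $U^{-1}$ to have bit-length polynomial in that of $U$, which is itself polynomial in the input size by Lemma \ref{polytime}. The lexicographic reordering and translations only permute or shift integer entries already produced, so they preserve polynomial bit-length. Adding the per-step bounds then gives a total running time polynomial in $n$ and $d$, as claimed.
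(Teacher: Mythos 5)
Your proposal is correct and follows essentially the same step-by-step decomposition the paper uses to justify the lemma: Step~1 is covered by Lemma~\ref{polytime} (Hermite Normal Form and the associated unimodular transformation via \cite{Cohen}), Step~2 is the $O(nd\log d)$ lexicographic sort of the columns, and Steps~3--4 are elementary linear operations. Your additional discussion of controlling the bit-length of intermediate entries of $U^{-1}$ (via unimodularity, Cramer's rule, and Hadamard's bound) is a worthwhile refinement of a point the paper leaves implicit under the phrase ``other linear operations.''
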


Now we can formally define our average-case distribution for a Gaussian parameter $s>0$ and show the
distribution only depends on the isomorphism class of the input lattice polytope and not on the specific representative.
\begin{defn}\label{fenbu}
	Given an $n$-dimensional convex lattice polytope $P\subset \mathbb{Z}^{n}$, we define the Gaussian form distribution $D_{s}([P])$ over $[P]$ with parameter $s>0$ algorithmically as follows:
	\begin{itemize}
		\item [1.] Fix a representative $P\in [P]$. 
		\item [2.] Give the quadratic form $Q=(V_{P}^{ord}-b_P)(V_{P}^{ord}-b_P)^{t}\in S^{>0}_{n}$.
		\item [3.] Sample $n$ vectors $(\bm{y}_1,...,\bm{y}_n)^{t}=:Y$ from $D_{Q,s}$.
		Repeat until linearly independent.
		\item [4.] Sample an integer vector $\bm{z}$ from $D_{Q,s}$.
		\item [5.] $(V_{R}^{ord},U,\bm{Z})$ $\leftarrow$ $\bm{\mathrm{Extract}}(V_{P}^{ord},Y,\bm{z})$.
		\item [6.]  Return $V_{R}^{ord}$.
	\end{itemize}
\end{defn}
\begin{proof}\label{wuguan}
	We have to show that the distribution is well-defined over the unimodular isomorphism
	class $[P]$, i.e., for any input representative $P'\in [P]$, the output distribution
	should be identical. 
	For $P$ and  $P'=VP+\bm{v}\in [P]$, we derive the relation for the $n\times n$ positive definite real symmetric matrices $Q$ and $Q'$:
		\begin{equation*}
			Q'=(V_{P'}^{ord}-b_{P'})(V_{P'}^{ord}-b_{P'})^{t}=V(V_{P}^{ord}-b_P)(V_{P}^{ord}-b_P)^{t}V^{t}=VQV^{t}.
		\end{equation*}
According to (\ref{Dps}), for  $\bm{x},\bm{x}'\in \mathbb{Z}^{n}$, $\mathop{\Pr} \limits_{X\thicksim D_{Q,s}} [X=\bm{x}] =\mathop{\Pr} \limits_{X\thicksim D_{Q',s}} [X=\bm{x}'] $ if and only if  $\frac{\rho_{Q,s}(\bm{x})}{\rho_{Q,s}(\mathbb{Z}^{n})}=\frac{\rho_{Q',s}(\bm{x}')}{\rho_{Q',s}(\mathbb{Z}^{n})}$. Furthermore, since $V\in GL_n(\mathbb{Z})$, then
\begin{align*}
	\rho_{Q,s}(\mathbb{Z}^{n})=\sum_{\bm{x}\in \mathbb{Z}^{n}}\mathrm{exp}\left(-\pi\frac{\bm{x}^{t}Q\bm{x}}{s^{2}}\right)=\sum_{\bm{x}'\in \mathbb{Z}^{n}}\mathrm{exp}\left(-\pi\frac{(\bm{x}')^{t}Q'\bm{x}'}{s^{2}}\right)=\rho_{Q',s}(\mathbb{Z}^{n}).
\end{align*}
 Thus, we have
	\begin{equation*}
	\mathop{\Pr} \limits_{X\thicksim D_{Q,s}} [X=\bm{x}] =\mathop{\Pr} \limits_{X\thicksim D_{Q',s}} [X=\bm{x}']\Longleftrightarrow 	\rho_{Q,s}(\bm{x})=\rho_{Q',s}(\bm{x}') \Longleftrightarrow \bm{x}^{t}Q\bm{x}=(\bm{x}')^{t}Q'\bm{x}' \Longleftrightarrow \bm{x}'=(V^{t})^{-1}\bm{x}.
	\end{equation*}
Therefore, if step 3 and step 4 on input $Q$ finish with $Y$ and $\bm{z}$,
	then they on input $Q'$ finish with $Y'=YV^{-1}$ and $\bm{z}'=(V^{t})^{-1}\bm{z}$ with the same probability.
	
	Let $\bm{V}$ be a random vertex matrix of convex lattice polytopes. Let $\bm{U}$ be a random unimodular matrix and $\bm{X}$ be a random integer vector. We consider the outputs $(V^{ord}_{R},U,\bm{Z})\leftarrow \bm{\mathrm{Extract}}(V^{ord}_{P},Y,\bm{z})$ and $(V^{ord}_{R'},U',\bm{Z}')\leftarrow \bm{\mathrm{Extract}}(V^{ord}_{P'},Y',\bm{z}')$. 
	
	By Lemma \ref{polytime}, it follows the outputs $(V_{P_1},U)\leftarrow \bm{\mathrm{PolyTime}}(V^{ord}_{P},Y)$, $(V_{P^{'}_{1}},U')\leftarrow \bm{\mathrm{PolyTime}}(V^{ord}_{P'},Y')$. From the canonicity of the Hermite Normal Form, we immediately obtain
	 that $Y'(U')^{-1}=YV^{-1}(U')^{-1}=H=YU^{-1}$, and thus $U'=UV^{-1}$ and $\Pr[\bm{U}=U]=\Pr[\bm{U}=U']$. Furthermore, we have 
	 	\begin{align*}
	 	P_{1}^{'}=U'(P'-b_{P'})=UV^{-1}[V(P-b_{P})]=U(P-b_{P})=P_1.		
	 \end{align*}
	Thus, $V_{P_{1}}^{ord}=V_{P'_{1}}^{ord}$, $\bm{v}_{1}^{ord}=(\bm{v}')_{1}^{ord}$, and $V_{P_2}^{ord}=V_{P_{2}^{'}}^{ord}$. Finally, we can obtain 
	\begin{gather*}
	V_{R'}^{ord}=V_{P_{2}^{'}}^{ord}+((U')^{-1})^{t}\bm{z}',\  V_{R}^{ord}=V_{P_2}^{ord}+(U^{-1})^{t}\bm{z};\\
	\bm{Z}=(U^{-1})^{t}\bm{z}-\bm{v}_{1}^{ord}-Ub_{P},\ \bm{Z}'=(U'^{-1})^{t}\bm{z}'-(\bm{v}')_{1}^{ord}-U'b_{P'}.
	\end{gather*}
Since $\Pr[\bm{U}=U]=\Pr[\bm{U}=U']$ and $\Pr[X=\bm{z}]=\Pr[X=\bm{z}']$, then $\Pr[\bm{X}=\bm{Z}]=\Pr[\bm{X}=\bm{Z}']$ and $\Pr[\bm{V}=V_{R}^{ord}]=\Pr[\bm{V}=V_{R'}^{ord}]$. Moreover, we have 
\begin{align*}
	V_{R'}^{ord}=V_{P_{2}^{'}}^{ord}+(U'^{-1})^{t}\bm{z}'=V_{P_2}^{ord}+(V^{-1}VU^{-1})^{t}\bm{z}=V_{P_2}^{ord}+(U^{-1})^{t}\bm{z}=V_{R}^{ord}.
\end{align*}

	Then, in step 5, if $\bm{\mathrm{Extract}}(V_{P}^{ord},Y, \bm{z})$ returns $(V_{R}^{ord},U,\bm{Z})$, then $\bm{\mathrm{Extract}}(V_{P'}^{ord},Y', \bm{z}')$ returns $(V_{R}^{ord},U',\bm{Z'})$ with the same probability.
So the distribution is well-defined over $[P]$.  
\end{proof}

Given the algorithmic definition of $D_{s}([P])$, an efficient sampling algorithm can be derived with minor adaptations. Firstly, we need to efficiently sample
from $D_{Q,s}$, which puts some constraints on the parameter $s$ depending on the
reducedness of the quadratic form $Q$. Secondly, we adopt the method in \cite{Ducas} to obtain $n$ linearly independent vectors, that is, we sample vectors one by one and only add them to $Y$ if they are independent. Then,  we still
need the additional constraint $s \geqslant \lambda_{n}(Q)$  to show that this succeeds with a
polynomial amount of samples. 
\begin{algorithm}[!h]
	\caption{Sampling from $D_{s}([P])$.}
	
	\KwData{An $n$-dimensional convex lattice polytope $P\subset \mathbb{Z}^{n}$, the quadratic form $Q=(V_{P}^{ord}-b_P)(V_{P}^{ord}-b_P)^{t}$, and a parameter $s\geqslant max\{ \lambda_{n}(Q), \parallel B^{*}_{Q}\parallel \cdot \sqrt{ln(2n+4)/\pi}\}$.}
	\KwResult{Sample $R=UP+\bm{Z}$ from $D_{s}([P])$, with a transformation $U\in GL_{n}(\mathbb{Z})$
		and an integer vector $\bm{Z}\in \mathbb{Z}^{n}$.}
	$Y\leftarrow \varnothing$;\\
	
	\While{$|Y|<n$}{Sample $\bm{x}\leftarrow D_{Q,s}$;\\
		\If{$\bm{x}^{t}\notin \mathrm{span}(Y)$}{Append $\bm{x}^{t}$ to $Y$;}
	}
	Sample $\bm{z}\leftarrow D_{Q,s}$;\\
	$(V_{R}^{ord},U, \bm{Z})\leftarrow \bm{\mathrm{Extract}}(V_{P}^{ord},Y,\bm{z})$;\\
\end{algorithm}
\begin{thm}\label{polytime2}
	For any $n$-dimensional convex lattice polytope $P\subset \mathbb{Z}^{n}$, let $Q=(V_{P}^{ord}-b_P)(V_{P}^{ord}-b_P)^{t}$ and the parameter
	\begin{align*}
		s\geqslant \max\{ \lambda_{n}(Q), \parallel B^{*}_{Q}\parallel \cdot \sqrt{ln(2n+4)/\pi}\}.
	\end{align*}
	Algorithm 2 runs in expected polynomial time and returns $(R,U, \bm{Z})\in [P]\times GL_n(\mathbb{Z})\times \mathbb{Z}^{n}$, where $R$ is a sample from $D_{s}([P])$, $U$ is a unimodular matrix and $\bm{Z}$ is an integer vector such that $R=UP+\bm{Z}$. Furthermore,  the isomorphism $\mathcal{U}=(U,\bm{Z})$ is uniform over the set of isomorphisms from $P$ to $R$. 
\end{thm}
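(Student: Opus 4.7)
The plan is to establish the three assertions of the theorem—expected polynomial running time, correct marginal distribution, and uniformity of the recovered isomorphism—in turn.

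For the running time, each call to the Gaussian sampler runs in polynomial time by Lemma \ref{caiyang}, whose hypothesis on $s$ matches our lower bound. The only potentially unbounded piece of Algorithm 2 is the inner while-loop collecting $n$ linearly independent samples; the extra hypothesis $s \geqslant \lambda_n(Q)$ is exactly what guarantees, via a standard discrete-Gaussian tail estimate of the kind used in \cite{Ducas}, that each fresh sample falls outside the current linear span with probability at least $1/\mathrm{poly}(n)$. Hence the expected number of samples before $Y$ is completed is polynomial, and combined with Lemma \ref{polytime1} the total expected running time is polynomial.

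For the output distribution, the sampling steps of Algorithm 2 implement the corresponding steps of Definition \ref{fenbu}: $Y$ is composed of $n$ linearly independent samples from $D_{Q,s}$, the vector $\bm{z}$ is an independent sample from $D_{Q,s}$, and the final call to \textbf{Extract} is identical. Thus $V^{ord}_R$ is distributed according to $D_s([P])$, and the well-definedness argument following Definition \ref{fenbu} shows that this distribution depends only on the class $[P]$.

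The heart of the theorem is uniformity. Fix $R$ in the image and one isomorphism $(U_0, \bm{Z}_0)$ from $P$ to $R$; every other isomorphism has the form $(U_0 A,\, U_0 \bm{a} + \bm{Z}_0)$ for a unique $(A, \bm{a}) \in \mathrm{Aut}(P)$. For each such automorphism I would define the sample-space map $\Phi_{(A,\bm{a})}(Y,\bm{z}) := (YA,\, A^{t}\bm{z})$, which is a measure-preserving bijection because $AQA^{t}=Q$ by \eqref{ecxtg} and because $A$ is unimodular (so $\mathbb{Z}^n$ and linear independence are preserved). To track its effect through Algorithm 1, I would use the automorphism identities $A b_P + \bm{a} = b_P$ (affine-equivariance of the vertex average) and $A(V^{ord}_P - b_P) = (V^{ord}_P - b_P)M$ for some permutation matrix $M$ encoding how $A$ permutes the vertices: these imply that the Hermite Normal Form of $YA$ has the same triangular factor as that of $Y$ (so $U\mapsto UA$), that $V_{P_1}$ is merely column-permuted by $M$ (so $V^{ord}_{P_1}$, its first column $\bm{v}^{ord}_1$, and $V^{ord}_{P_2}$ are all unchanged), and finally that the translation formula returns $\bm{Z} \mapsto \bm{Z} + U\bm{a}$ with $V^{ord}_R$ preserved. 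The induced action of $\mathrm{Aut}(P)$ on the sample space conditioned on producing $R$ is free (boundedness of $P$ forces $\bm{a} = \bm{0}$ whenever $A = I$; invertibility of $Y$ forces $A = I$ whenever $YA = Y$) and realises every isomorphism from $P$ to $R$ exactly once, which yields the uniformity claim. The step I expect to require the most care is confirming the invariance of $\bm{v}^{ord}_1$ under $\Phi_{(A,\bm{a})}$, since this is the linchpin ensuring that the translation component $\bm{Z}$ transforms in exactly the correct way, and it depends on a delicate interaction between the vertex-permutation $M$ and the lexicographic sort.
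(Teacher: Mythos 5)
Your proposal is correct and follows essentially the same route as the paper: polynomial iteration count from $s \geqslant \lambda_n(Q)$, marginal distribution by matching Definition \ref{fenbu}, and uniformity via the measure-preserving map $(Y,\bm z)\mapsto(YA,A^{t}\bm z)$ whose effect on \textbf{Extract} is tracked to yield $U\mapsto UA$ and $\bm Z\mapsto\bm Z+U\bm a$. The paper packages this last step as an application of Definition \ref{fenbu}'s well-definedness with the representative $P'=A^{-1}P-A^{-1}\bm a$ (which coincides with $P$ as a set), whereas you phrase it directly as a free $\mathrm{Aut}(P)$-action on the sample space, but the computation and the key observation about the invariance of $\bm v_1^{ord}$ under the induced column permutation are the same.
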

\begin{proof}
	By Lemmas \ref{caiyang} and \ref{polytime1}, every step in Algorithm 2 runs in polynomial
	time. It remains to prove that the number of iterations is polynomially bounded.  Since we adopt the method in \cite{Ducas} to sample $n$ linearly independent vectors, we reproduce the proof here for the sake of completeness. Let the random variable $K$ denote the number of samples before we find $n$ independent ones. If $|Y|<n$, then because $s \geqslant \lambda_{n}(Q)$, by the Lemma 5.1 in \cite{Regev}, we have that every newly sampled vector $\bm{x} \leftarrow D_{Q,s}$ is not in the span of $Y$ with constant probability at least $p:=1-(1+e^{-\pi})^{-1}>0$. So $K$ is bounded from
	above by a negative binomial distribution for $n$ successes with success probability
	$p$, which implies that $E[K]\leqslant \frac{n}{p}$. In particular, according to Corollary 5.1 in \cite{Regev}, we know $\mathrm{Pr}[K> n^{2}] \leqslant 2^{-\Omega (n)}$. When the while loop succeeds, the set $Y$ is distributed as $n$ vectors sampled from $D_{Q,s}$ under the linear independence condition, following exactly Definition \ref{fenbu}.
	
	 The set of isomorphisms from $P$ to $R$ is $\{\mathcal{UA}=(UA,U\bm{a}+\bm{Z})\in GL_n(\mathbb{Z})\times \mathbb{Z}^{n}:\mathcal{A}\in \mathrm{Aut}(P)\}$.  We now prove that the isomorphism $\mathcal{U}=(U,\bm{Z})$ is uniform over this set.
	Suppose that the algorithm terminates with a full-rank matrix $Y$ and an integer vector $\bm{z}$, returning $(V_{R}^{ord},U, \bm{Z})\leftarrow \bm{\mathrm{Extract}}(V_{P}^{ord},Y,\bm{z})$. By Lemma \ref{polytime} and Algorithm 1, we have $YU^{-1}=H$ and $\bm{Z}=(U^{-1})^{t}\bm{z}-\bm{v}_{1}^{ord}-Ub_{P}$. For any unimodular automorphism $\mathcal{A}\in \mathrm{Aut}(P)$ such that $P=AP+\bm{a}$, it follows that $P=A^{-1}P-A^{-1}\bm{a}$. Let $A^{-1}P-A^{-1}\bm{a}=:P'$. By the proof of Definition \ref{fenbu}, for $P'\in [P]$, the algorithm terminates with the full-rank matrix $Y'=YA$ and the integer vector $\bm{z}'=A^{t}\bm{z}$ with the same probability, and returns $(V_{R}^{ord},U', \bm{Z}')\leftarrow \bm{\mathrm{Extract}}(V_{P'}^{ord},Y', \bm{z}')$ with the same probability. Then, we have $\Pr[(\bm{U}, \bm{X})=(U,\bm{Z})]=\Pr[(\bm{U}, \bm{X})=(U',\bm{Z}')]$.
	
	 From Lemma \ref{polytime}, for $P'$, we have $Y'(U')^{-1}=YA(U')^{-1}=H=YU^{-1}$, so $U'=UA$. By Algorithm 1, we have $\bm{Z}'=(U'^{-1})^{t}\bm{z}'-(\bm{v}')_{1}^{ord}-U'b_{P'}$, where $(\bm{v}')_{1}^{ord}=\bm{v}_{1}^{ord}$. 
	  Then, we can obtain
	\begin{eqnarray}\label{R}
		\begin{aligned}
			\bm{Z}'&=(U'^{-1})^{t}\bm{z}'-(\bm{v}')_{1}^{ord}-U'b_{P'}\\
			&=((UA)^{-1})^{t}A^{t}\bm{z}-\bm{v}^{ord}_1-UA(A^{-1}b_{P}-A^{-1}\bm{a})\\
			&=(U^{-1})^{t}\bm{z}-\bm{v}^{ord}_1-Ub_{P}+U\bm{a}.
		\end{aligned}
	\end{eqnarray}
	Since $\bm{Z}=(U^{-1})^{t}\bm{z}-\bm{v}_{1}^{ord}-Ub_{P}$, then $\bm{Z}'=U\bm{a}+\bm{Z}$.
	
	 Thus, $\Pr[(\bm{U}, \bm{X})=(U,\bm{Z})]=\Pr[(\bm{U}, \bm{X})=(UA,U\bm{a}+\bm{Z})]$, which makes the returned transformation uniform over the set of isomorphisms $\{\mathcal{UA}=(UA,U\bm{a}+\bm{Z})\in GL_n(\mathbb{Z})\times \mathbb{Z}^{n}:\mathcal{A}\in \mathrm{Aut}(P)\}$ from $P$ to $R$. Specifically, the returned unimodular matrix $U$ is uniform on the set of unimodular transformations $\{UA: A\in \mathrm{Aut_u}(P)\}$ from $P$ to $R$, and the returned lattice translation $\bm{Z}$ is uniform on the set of lattice translations $\{U\bm{a}+\bm{Z}: \bm{a}\in \mathrm{Aut_t}(P)\}$ from $P$ to $R$.
\end{proof}
\subsection{Zero Knowledge Proof of Knowledge}
\setlength{\FrameRule}{1pt}
\setlength{\FrameSep}{10pt}
\setlength{\leftmargin}{0pt}
\begin{framed}

	\centerline{\textbf{Zero Knowledge Proof of Knowledge $\Sigma$}}
	
	\noindent Consider two unimodularly isomorphic public $n$-dimensional lattice polytopes $P_0, P_1 \subset \mathbb{Z}^{n}$ with $d$ vertices, a secret unimodular transformation $U \in GL_n(\mathbb{Z})$ such that $P_1-b_{P_1}=U(P_0-b_{P_0})$. Then, $Q_0=(V_{P_0}^{ord}-b_{P_0})(V_{P_0}^{ord}-b_{P_0})^{t}$ and $Q_1=(V_{P_1}^{ord}-b_{P_1})(V_{P_1}^{ord}-b_{P_1})^{t}=UQ_0U^{t}$. Given the public parameter
	\begin{align*}
		s \geqslant \max \left\{\lambda_n([Q_0]), \max \left\{\|B_{Q_0}^\ast\|, \|B_{Q_{1}}^\ast\|\right\} \cdot \sqrt{\ln(2n + 4) / \pi}\right\},
	\end{align*}

	\noindent we define the following protocol $\Sigma$ that gives a zero-knowledge proof of knowledge of an isomorphism between $P_0$ and $P_1$:\\ 
	
	$\begin{array}{ccc}
		\text{Prover} &   & \text{Verifier} \\
		\text{Sample $P' \gets \mathcal{D}_s([P_0])$ by Alg. 2,}\\
		\text{ together with $V$ and $\bm{Z}'$ }\\
		s.t.\ P' = VP_0+\bm{Z}' &   &\\
		& \xrightarrow{\quad P' \quad}  & \text{Sample $c \rightarrow \mathcal{U}(\{0,1\})$}\\
		\text{Compute $W=VU^{-c}$} & \xleftarrow{\quad c \quad} &\\
		& \xrightarrow{\quad W \quad} & \quad \text{$\quad$ Check if  $W\in GL_n(\mathbb{Z})$,}\quad\\  
		&     & \text{ and $\mathcal{V}(P'-b_{P'})=\mathcal{V}(WP_c-Wb_{P_c})$}.
		
	\end{array}$
\end{framed}
 \noindent \emph{Efficiency and completeness.} For the efficiency of $\Sigma$ we need to verify that Algorithm 2 runs in polynomial time, and indeed by Lemma \ref{polytime2} this is the case
because
\begin{align*}
	s\geq max\{ \lambda_{n}([Q_0]), \parallel B^{*}_{Q_0}\parallel \cdot \sqrt{ln(2n+4)/\pi}\}.
\end{align*}
For the $W$, we have that $W \in GL_n(\mathbb{Z})$ if and only if $W$ is integral and
$det(W)=\pm 1$, both of which are easy to check in polynomial time. To verify vertex set equality, we sort the elements of $\mathcal{V}(P'-b_{P'})$ and $\mathcal{V}(WP_c-Wb_{P_c})$  lexicographically, then compare them sequentially. If all corresponding elements match, the sets are equal; otherwise, they are not. This approach also runs in polynomial time.

For the completeness of $\Sigma$, note that when the prover executes the protocol
honestly we have $W :=V\cdot U^{-c}\in GL_n(\mathbb{Z})$ because $U$ and $V$ are both unimodular
by definition. Additionally, we have
\begin{align}\label{dmtxt}
	P'-b_{P'}=V(P_0-b_{P_0})=VU^{-c}(U^{c}(P_0-b_{P_0}))=W(P_c-b_{P_c}),
\end{align}
and thus the verifier accepts.
\vskip 0.2cm
\noindent \emph{Special soundness.} Suppose we have two accepting conversations $(P',0,W_{0})$ and $(P', 1, W_1)$ of $\Sigma$ where the first message is identical. The acceptance implies that $W_0, W_1 \in  GL_n(\mathbb{Z})$ and $W_{0}(P_0-b_{P_0})=P'-b_{P'}=W_1(P_1-b_{P_1})$, and thus $U':=W_{1}^{-1}W_0 \in GL_n(\mathbb{Z})$ gives an isomorphism from $P_0$ to $P_1$ as
\begin{align*}
	U'(P_0-b_{P_0})=W_{1}^{-1}(W_0(P_0-b_{P_0}))=W_{1}^{-1}(W_1(P_1-b_{P_1}))=P_1-b_{P_1}.
\end{align*}
We conclude that $\Sigma$ has the special soundness property.

\vskip 0.2cm
\noindent \emph{Special honest-verifier zero-knowledge.} 
Given any challenge $c$, we create a simulator that, given the
public inputs $P_0, P_1$, outputs an accepting conversation with the same probability distribution as that between an honest prover and verifier. We analyze the distribution of the three messages in the real conversation. The first message $P'$ is always distributed as $D_s([P_0])$, the challenge $c$ as $\mathcal{U}(\{0,1\})$, and $V$ is uniform over the set of unimodular transformations from $P_0$ to $P'$ by Theorem \ref{polytime2}. Given the challenge $c$, the honest prover computes $W=VU^{-c}$, where $U$ is a unimodular transformation from $P_0$ to $P_1$. The verifier obtains $P'=W(P_c-b_{P_c})+b_{P'}$; thus, by Theorem \ref{polytime2}, $W=VU^{-c}$ is uniform over the set of unimodular transformations from $P_c$ to $P'$.

To simulate this, we first sample the uniformly random challenge $c \leftarrow \mathcal{U}(\{0,1\})$. If $c=0$ we can proceed the same as in $\Sigma$ itself. Specifically, we sample $P' \leftarrow D_s([P_0])$ using Algorithm 2, together with  $V$ and $\bm{Z}'$ such that $P' = VP_0+\bm{Z}'$, and set $W:=V$. The final conversation $(P',0,W)$ is accepting by construction and follows the same distribution as during an honest execution conditioned on challenge $c=0$.

If $c=1$ we use the fact that $[P_0] = [P_1]$ and $D_s([P_0])=D_s([P_1])$. We use Algorithm 2
with representative $P_1$ as input instead of $P_0$. So again we obtain 
$P'\leftarrow D_s([P_1])=D_s([P_0])$, but now together with $W$ and $\bm{Z}'_1$ such that $P'=WP_1+\bm{Z}'_1$. The conversation $(P', 1, W)$ is accepting by construction, and $P'$ follows the same distribution $D_{s}([P_0])$. Additionally, by Theorem \ref{polytime2}, the unimodular transformation $W$ is indeed uniform over the set of unimodular transformations from $P_1$ to $P'$.

We conclude that $\Sigma$ has the special honest-verifier zero-knowledge property.

\section{The Algorithm for Unimodular Isomorphism Problem}
In this section, given $n$-dimensional convex lattice polytopes $P,P'\subset \mathbb{Z}^{n}$ with $d$ vertices, we describe a method to compute all unimodular affine transformations between $P$ and $P'$, in particular,  to decide the UIP. Specifically, we search for such transformations via label-preserving isomorphic minimum spanning trees of labeled graphs $\mathcal{GW}(P)$ and $\mathcal{GW}(P')$. Next, we present a detailed procedure as follows.

\vskip 0.2cm
\noindent \emph{The algorithm for UIP}. Firstly, given the vertex matrix $V_{P}$ of $P$, we find all edges of the polytope to construct a graph $G(P)$ defined by its vertices and one-dimensional edges. $G(P)$ can be constructed using the method proposed in \cite{Yang}.
\begin{thm}[\cite{Yang}]\label{veg}
	There exists a polynomial time algorithm \textbf{Graph}$(V_{P},n,d)$ that given the $n\times d$ vertex matrix $V_{P}$ of a lattice polytope $P$, returns the vertex/edge graph $G(P)=(\mathcal{V}(P),E)$.
\end{thm}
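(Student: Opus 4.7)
The plan is to determine, for each unordered pair of vertices of $P$, whether that pair spans a $1$-dimensional face, and then to show that every such test reduces to a polynomial-size linear program. Recall that a pair $\{\bm{v}_i,\bm{v}_j\}\subset \mathcal{V}(P)$ is an edge of $P$ if and only if there exist a linear functional $\bm{c}\in\mathbb{R}^{n}$ and a scalar $\alpha\in\mathbb{R}$ with $\bm{c}^{t}\bm{v}_i=\bm{c}^{t}\bm{v}_j=\alpha$ and $\bm{c}^{t}\bm{v}_k<\alpha$ for every other vertex $\bm{v}_k$; geometrically, the corresponding hyperplane is a supporting hyperplane meeting $P$ in exactly $\mathrm{conv}(\bm{v}_i,\bm{v}_j)$. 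Thus constructing the edge set $E$ reduces to $\binom{d}{2}$ strict-separation feasibility tests.

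Concretely, for each pair $(i,j)$ I would solve
\begin{align*}
\max\ & t \\
\text{s.t.}\ & \bm{c}^{t}\bm{v}_i=\alpha,\quad \bm{c}^{t}\bm{v}_j=\alpha, \\
& \bm{c}^{t}\bm{v}_k+t\leqslant \alpha \quad \text{for every } k\notin\{i,j\}, \\
& -1\leqslant c_{\ell}\leqslant 1\quad \forall \ell\in\{1,\dots,n\},
\end{align*}
and declare $\{\bm{v}_i,\bm{v}_j\}\in E$ if and only if the optimum satisfies $t>0$. Correctness in both directions is immediate: a strict supporting hyperplane isolating $\{\bm{v}_i,\bm{v}_j\}$ can be rescaled so that its coefficient vector lies in $[-1,1]^{n}$, which yields a feasible solution with $t=\min_{k}(\alpha-\bm{c}^{t}\bm{v}_k)>0$; conversely, a feasible solution with $t>0$ directly exhibits the required hyperplane. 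The box constraints on $\bm{c}$ keep the feasible region compact and avoid unbounded optima, while the slack variable $t$ converts the inherent strict inequality $\bm{c}^{t}\bm{v}_k<\alpha$ into a single non-strict LP whose sign of the optimum answers the question.

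The main obstacle is certifying polynomial running time uniformly in $n$ and $d$. Each LP has $O(n+1)$ variables and $O(d)$ constraints whose data are linear combinations of entries of $V_P$, hence of bit-length polynomial in the encoding size of $V_P$; by Khachiyan's ellipsoid algorithm (or any interior-point method), a single LP is solvable in time polynomial in $n$, $d$, and $\mathrm{size}(V_P)$. Iterating over the $\binom{d}{2}$ pairs and collecting the successful ones therefore yields $G(P)=(\mathcal{V}(P),E)$ in polynomial time overall. The subtle point to handle carefully is the polynomial dependence on the bit-length (not just on $n$ and $d$), which follows once the LP data is bounded and rational; alternative face-lattice or convex-hull approaches would risk exponential dependence on $n$, so the pairwise LP framing is essential.
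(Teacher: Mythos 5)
The paper invokes this result as an external citation (to Yang's facet-enumeration preprint) and provides no proof of its own, so there is no internal argument to compare against; your task here is to supply one. Your LP-based argument is a correct, self-contained proof. The characterization of the $1$-skeleton you use---$\{\bm{v}_i,\bm{v}_j\}$ is an edge iff some hyperplane $\bm{c}^{t}\bm{x}=\alpha$ supports $P$ and meets $\mathcal{V}(P)$ in exactly $\{\bm{v}_i,\bm{v}_j\}$---is precisely the face-lattice definition applied to $1$-faces, and converting the strict inequalities $\bm{c}^{t}\bm{v}_k<\alpha$ into a non-strict LP by maximizing a slack $t$ under a box constraint on $\bm{c}$ is a standard strict-separability gadget. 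The LP is always feasible (take $\bm{c}=\bm{0}$, $\alpha=0$, $t=0$), the sign of its optimum answers the edge test in both directions as you argue, and its data has bit-length polynomial in $\operatorname{size}(V_P)$, so ellipsoid or interior-point methods solve each of the $\binom{d}{2}$ instances in time polynomial in $n$, $d$, and $\operatorname{size}(V_P)$.

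One small point worth tightening: if there are no indices $k\notin\{i,j\}$ (which can occur only in the degenerate case $n=1$, $d=2$, since a full-dimensional polytope in $\mathbb{R}^{n}$ has at least $n+1$ vertices), the LP has no constraint involving $t$ and is unbounded above. You should either add the harmless cap $t\leqslant 1$, which does not affect the sign test, or dispatch the two-vertex case directly (a segment has exactly one edge). With that minor fix the argument is complete and matches what one would expect a reference like [Yang] to prove.
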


Next, we augment $G(P)$ by assigning a label to each vertex, where 
the label is an easily-computed invariant under unimodular affine transformations. To that end, we introduce the following quantity.

For each vertex $\bm{v}\in \mathcal{V}(P)$, we obtain the set $\{\bm{v}_{1},...,\bm{v}_{m}\}\subset \mathcal{V}(P)$ of adjacent vertices via graph $G(P)$. Subtracting $\bm{v}$ from each adjacent vertex yields the vectors $\bm{v}_{1}-\bm{v},...,\bm{v}_{m}-\bm{v}$. These $m$ $(m\geqslant n)$ vectors are integral and their rank is $n$. This is because $P$ is a full dimensional lattice polytope in $n$-dimensional space. We then consider the $n\times n$ matrix

\begin{equation}\label{ld}
	A_{\bm{v}}=(\bm{v}_{1}-\bm{v},...,\bm{v}_{m}-\bm{v})\cdot \begin{pmatrix}
		\bm{v}_{1}^{t}-\bm{v}^{t}\\ 
		\vdots\\ 
		\bm{v}_{m}^{t}-\bm{v}^{t}
	\end{pmatrix},
\end{equation}
which is an $n\times n$ positive definite integer symmetric matrix. Thus, for each vertex $\bm{v}\in \mathcal{V}(P)$, the determinant $|A_{\bm{v}}|$ is positive.

If $n$-dimensional lattice polytopes $P,P'\subset \mathbb{Z}^{n}$ are unimodularly isomorphic, there exists a unimodular affine transformation $\mathcal{U}=(U,\bm{Z})\in \mathcal{GL}_n(\mathbb{Z})\times \mathbb{Z}^{n}$ such that $P'=\mathcal{U}(P)=UP+\bm{Z}$. This transformation preserves the relative positional relationships between points and is invertible, with the inverse transformation also being a unimodular affine transformation, i.e., 
 $P=\mathcal{U}^{-1}(P')=U^{-1}P'-U^{-1}\bm{Z}$.

Thus, $\bm{v}$ is a vertex of $P$ with adjacent vertices $\{\bm{v}_{1},...,\bm{v}_{m}\}$ if and only if $\mathcal{U}(\bm{v})$ is a vertex of $P'$ with adjacent vertices $\{\mathcal{U}(\bm{v}_{1}),...,\mathcal{U}(\bm{v}_{m})\}$.  For the vertex $\mathcal{U}(\bm{v})$, consider the $n\times n$ matrix
\[
\begin{aligned}
	A_{\mathcal{U}(\bm{v})}&=(\mathcal{U}(\bm{v}_{1})-\mathcal{U}(\bm{v}),...,\mathcal{U}(\bm{v}_{m})-\mathcal{U}(\bm{v}))M\cdot M^{t} \begin{pmatrix}
		\mathcal{U}(\bm{v}_{1})^{t}-\mathcal{U}(\bm{v})^{t}\\ 
		\vdots\\ 
		(\mathcal{U}(\bm{v}_{m}))^{t}-(\mathcal{U}(\bm{v}))^{t}
	\end{pmatrix}\\
	&=U(\bm{v}_{1}-\bm{v},...,\bm{v}_{m}-\bm{v})\cdot \begin{pmatrix}
		\bm{v}_{1}^{t}-\bm{v}^{t}\\ 
		\vdots\\ 
		\bm{v}_{m}^{t}-\bm{v}^{t}
	\end{pmatrix}U^{t},
\end{aligned}
\]
where $M$ is an $m \times m$ permutation matrix. Since $\det(U)=\pm 1$, the determinants $|A_{\bm{v}}|$ and $|A_{\mathcal{U}(\bm{v})}|$ corresponding to $\bm{v}$ and $\mathcal{U}(\bm{v})$ are equal. This invariance motivates the following definition.

\begin{defn}\label{label1}
	Let $P\subset \mathbb{Z}^{n}$ be an $n$-dimensional lattice polytope with vertex set $\mathcal{V}(P)$. For each vertex $\bm{v}\in \mathcal{V}(P)$, the \emph{label} $\ell(\bm{v})$ of $\bm{v}$ is the determinant of the matrix $A_{\bm{v}}$ defined in (\ref{ld}).
\end{defn}
From the preceding analysis, this vertex label is invariant under unimodular affine transformations.
\begin{defn}
	Let $P\subset \mathbb{Z}^{n}$ be an $n$-dimensional lattice polytope with vertex/edge graph $G(P)$. For each vertex $\bm{v}$ of $G(P)$, we assign the label $\ell(\bm{v})$, yielding the labeled graph $\mathcal{G}(P)$.
\end{defn}
Further,  we extend the vertex labels to edges of 
 $\mathcal{G}(P)$: for each edge $\{\bm{v}_i,\bm{v}_j\}$, we define the edge label as $\ell(\bm{v}_i)+\ell(\bm{v}_j)$, which serves as a weight. We denote the resulting vertex- and edge-labeled graph by $\mathcal{GW}(P)$.
By the invariance of labels under unimodular affine transformations, we immediately obtain the following lemma.
\begin{lem}
	If $n$-dimensional lattice polytopes $P,P'\subset \mathbb{Z}^{n}$ are unimodularly isomorphic, the labeled graphs $\mathcal{GW}(P)$ and $\mathcal{GW}(P')$ are label-preserving isomorphic under unimodular affine transformations.
\end{lem}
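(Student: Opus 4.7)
The plan is to exhibit the unimodular affine transformation itself as the required label-preserving graph isomorphism, so the argument reduces to checking that $\mathcal{U}$ respects (a) vertex-incidence, (b) edge-incidence, (c) vertex labels, and (d) edge labels.

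First I would fix a unimodular affine transformation $\mathcal{U}=(U,\bm{Z})\in GL_n(\mathbb{Z})\times \mathbb{Z}^n$ with $P'=UP+\bm{Z}$ and define $\varphi\colon \mathbb{Z}^n\to \mathbb{Z}^n$ by $\varphi(\bm{x})=U\bm{x}+\bm{Z}$. Since $\mathcal{U}$ is an invertible affine bijection of $\mathbb{R}^n$ mapping $P$ onto $P'$, it sends faces to faces of the same dimension. In particular, it restricts to a bijection $\varphi\colon \mathcal{V}(P)\to\mathcal{V}(P')$, and a pair $\{\bm{v},\bm{w}\}$ bounds a one-dimensional face of $P$ if and only if $\{\varphi(\bm{v}),\varphi(\bm{w})\}$ bounds a one-dimensional face of $P'$. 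This establishes that $\varphi$ is an isomorphism of the underlying vertex/edge graphs $G(P)$ and $G(P')$.

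Next I would transfer this isomorphism to the labeled graph $\mathcal{G}(P)$. The vertex-label invariance $\ell(\bm{v})=\ell(\varphi(\bm{v}))$ is precisely the identity
\[
|A_{\varphi(\bm{v})}|=|U A_{\bm{v}} U^{t}|=(\det U)^{2}|A_{\bm{v}}|=|A_{\bm{v}}|,
\]
already derived in the paragraph preceding Definition~\ref{label1}, so no further work is needed beyond citing that computation and observing that $\varphi$ maps the adjacent vertex set of $\bm{v}$ in $G(P)$ bijectively to the adjacent vertex set of $\varphi(\bm{v})$ in $G(P')$ (which is what allows the matrix $A_{\varphi(\bm{v})}$ to be formed from the images of the neighbours of $\bm{v}$ in the first place).

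Finally, for the edge weights, I would note that for any edge $\{\bm{v}_i,\bm{v}_j\}$ of $\mathcal{GW}(P)$ the weight $\ell(\bm{v}_i)+\ell(\bm{v}_j)$ equals $\ell(\varphi(\bm{v}_i))+\ell(\varphi(\bm{v}_j))$, which is the weight of the corresponding edge $\{\varphi(\bm{v}_i),\varphi(\bm{v}_j)\}$ in $\mathcal{GW}(P')$. Combining these observations, $\varphi$ is the required label-preserving isomorphism. No step here is a genuine obstacle; the only subtlety worth stating explicitly is that $\varphi$ preserves the neighbourhood structure, because the vertex label $\ell(\bm{v})$ is defined via the neighbours of $\bm{v}$ in $G(P)$ and one must be sure these go to the neighbours of $\varphi(\bm{v})$ in $G(P')$ before invoking the determinant invariance.
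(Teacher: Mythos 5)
Your proposal is correct and follows the same route as the paper: the paper states that the lemma follows ``immediately'' from the invariance of the vertex labels under unimodular affine transformations (which it established in the paragraph preceding Definition~\ref{label1}), and you simply spell out that argument by observing that $\mathcal{U}$ maps faces to faces (so it is an isomorphism of $G(P)$ onto $G(P')$) and then invoking $|A_{\mathcal{U}(\bm{v})}|=|A_{\bm{v}}|$ for the vertex labels, from which the edge labels follow by additivity.
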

Next, we study the correspondence of spanning trees in label-preserving isomorphic graphs.
\begin{lem}\label{11}
	If $n$-dimensional lattice polytopes $P,P'\subset \mathbb{Z}^{n}$ are unimodularly isomorphic, there is a unimodular affine transformation $\mathcal{U}=(U,\bm{Z})$ such that $P'=\mathcal{U}(P)=UP+\bm{Z}$. Under $\mathcal{U}$, the spanning trees of labeled graphs $\mathcal{GW}(P)$ and $\mathcal{GW}(P')$ maintain a one-to-one correspondence that is label-preserving isomorphic.
\end{lem}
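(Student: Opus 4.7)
My plan is to bootstrap the statement from the preceding lemma, which already says that $\mathcal{U}$ induces a label-preserving graph isomorphism $\Phi:\mathcal{GW}(P)\to\mathcal{GW}(P')$. Once $\Phi$ is in hand, the correspondence between spanning trees is essentially a formal consequence of the definition of graph isomorphism, and the real work is just verifying that $\Phi$ sends spanning trees to spanning trees with matching vertex labels and matching edge weights.

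First, I would recall that $\mathcal{U}$ restricts to a bijection $\Phi:\mathcal{V}(P)\to\mathcal{V}(P')$ by $\bm{v}\mapsto U\bm{v}+\bm{Z}$, and that by the discussion preceding Definition~\ref{label1} we have $\ell(\Phi(\bm{v}))=\ell(\bm{v})$ for every vertex. Second, I would verify that $\{\bm{v}_i,\bm{v}_j\}$ is an edge of $G(P)$ (equivalently of $\mathcal{GW}(P)$) if and only if $\{\Phi(\bm{v}_i),\Phi(\bm{v}_j)\}$ is an edge of $\mathcal{GW}(P')$: this follows because $\mathcal{U}$ is an affine bijection that sends $P$ to $P'$ and therefore maps the face lattice of $P$ onto that of $P'$, so $1$-faces (edges) go to $1$-faces. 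Combining with vertex-label invariance, the induced edge label $\ell(\bm{v}_i)+\ell(\bm{v}_j)$ equals $\ell(\Phi(\bm{v}_i))+\ell(\Phi(\bm{v}_j))$, so $\Phi$ preserves both vertex and edge labels.

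Third, I would set up the correspondence on spanning trees. Given any spanning tree $T$ of $\mathcal{GW}(P)$ with vertex set $\mathcal{V}(P)$ and edge set $E(T)$, I define
\begin{equation*}
\Phi(T):=\bigl(\Phi(\mathcal{V}(P)),\,\{\{\Phi(\bm{v}_i),\Phi(\bm{v}_j)\}:\{\bm{v}_i,\bm{v}_j\}\in E(T)\}\bigr).
\end{equation*}
Since $\Phi$ is a bijection of vertices and a bijection of edges, $\Phi(T)$ is a connected acyclic spanning subgraph of $\mathcal{GW}(P')$, i.e.\ a spanning tree, and its labels on vertices and edges agree with those of $T$ entry by entry. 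The map $T\mapsto\Phi(T)$ is injective, and applying the same construction to $\mathcal{U}^{-1}$ (which is also unimodular affine) produces the inverse, so $\Phi$ induces a one-to-one correspondence between the spanning trees of $\mathcal{GW}(P)$ and those of $\mathcal{GW}(P')$.

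I do not expect any serious obstacle here: the only non-formal input is the fact that a unimodular affine bijection between two full-dimensional lattice polytopes respects the vertex/edge graph, which in turn follows from its preserving the face lattice. If pressed, the mildly subtle point to state carefully is that ``label-preserving isomorphic'' for the tree correspondence means preservation of both the vertex labels $\ell(\bm{v})$ and the induced edge weights $\ell(\bm{v}_i)+\ell(\bm{v}_j)$, both of which follow immediately from $\ell\circ\Phi=\ell$ on $\mathcal{V}(P)$.
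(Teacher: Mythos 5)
Your proof takes essentially the same route as the paper: both rely on the preceding lemma (that $\mathcal{U}$ gives a label-preserving graph isomorphism between $\mathcal{GW}(P)$ and $\mathcal{GW}(P')$), observe that a graph isomorphism sends spanning trees to spanning trees while preserving vertex and hence edge labels, and use $\mathcal{U}^{-1}$ for the reverse direction to obtain the bijection. Your extra remark about the face lattice merely unpacks what the paper delegates to the preceding lemma; the arguments are otherwise identical.
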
 

\begin{proof}
	Let $T=(\mathcal{V}(P),E_T)$ be a spanning tree of $\mathcal{GW}(P)$. Since  $\mathcal{U}$ is a label-preserving graph isomorphism between $\mathcal{GW}(P)$ and $\mathcal{GW}(P')$, the image graph $T'=\mathcal{U}(T)=(\mathcal{U}(\mathcal{V}(P)),\mathcal{U}(E_T))$ satisfies: 
	\begin{itemize}
		\item $\mathcal{U}(\mathcal{V}(P))=\mathcal{V}(P')$  (bijectivity of unimodular transformations),
		\item $\mathcal{U}(E_T)=\{\{\mathcal{U}(\bm{v}),\mathcal{U}(\bm{w})\}\ |\ \{\bm{v},\bm{w}\}\in E_T\}$.
	\end{itemize}
By the label invariance under unimodular transformations, vertex labels satisfy $\ell(\bm{v})=\ell(\mathcal{U}(\bm{v}))$,  hence edge labels satisfy $\ell(\bm{v})+\ell(\bm{w})=\ell(\mathcal{U}(\bm{v}))+\ell(\mathcal{U}(\bm{w}))$.
Moreover, $T'$  is a spanning tree: connectivity and acyclicity are preserved under graph isomorphism, so $T'$ spans $\mathcal{V}(P')$ and contains no cycles.
	
Conversely, let $T'=(\mathcal{V}(P'),E'_{T'})$ be a spanning tree of $\mathcal{GW}(P')$. The inverse transformation  $\mathcal{U}^{-1}=(U^{-1},-U^{-1}\bm{Z})$ is also a unimodular affine transformation and a label-preserving graph isomorphism. By analogous reasoning, $T=\mathcal{U}^{-1}(T')$ is a spanning tree of $\mathcal{GW}(P)$ with matching vertex and edge labels.
	
Thus, $\{T_1,...,T_t\}$ is the set of all spanning trees of $\mathcal{GW}(P)$ if and only if $\{\mathcal{U}(T_1),...,\mathcal{U}(T_t)\}$ is the set for 
 $\mathcal{GW}(P')$, and each $T_i$ is label-preserving isomorphic to $\mathcal{U}(T_i)$ under $\mathcal{U}$. 
\end{proof} 
Let $T$ and $T'$ be spanning trees of $\mathcal{GW}(P)$ and $\mathcal{GW}(P')$, respectively. If $T$ and $T'$ are not label-preserving isomorphic, they cannot be isomorphic under any unimodular affine transformation. This implies that unimodular affine transformations between polytopes can be verified by the existence of label-preserving isomorphic trees. Additionally, if the sum of edge weights (i.e., edge labels defined as $\ell(\bm{v}_i)+\ell(\bm{v}_j)$) in $T$ and $T'$ differs, $T$ and $T'$ cannot be label-preserving isomorphic. For this reason, we can focus on minimum spanning trees (MSTs) of $\mathcal{GW}(P)$ and $\mathcal{GW}(P')$ that are label-preserving isomorphic, identifying unimodular affine transformations through the label-preserving isomorphism of such MSTs.
\begin{rem}\label{vl}
 Since edge weights are defined as the sum of vertex labels, a vertex-label-preserving isomorphism naturally preserves edge labels, reducing the MST analysis to vertex labels alone. 
\end{rem}
To verify whether two trees 
$T$ and $T'$ with vertex labels $\ell(\bm{v})$ are label-preserving isomorphic, we employ the algorithm proposed in \cite{Kobler}, which efficiently checks isomorphism while respecting vertex labels. We describe the algorithm as the following Algorithm 3, which presents an efficient check for label-preserving isomorphism in trees by iteratively assigning new labels and comparing multisets of vertex labels.

\vskip 0.2cm
\begin{algorithm}[htb]
	\SetAlgoLined
	\KwIn{Two trees $T_1$ and $T_2$ with vertex labels $\ell(\bm{v})$.}
	\KwOut{"Label-preserving isomorphic" or "Non label-preserving isomorphic".}
	
	Label all leaf vertices of $T_1$ and $T_2$ with new label $1$\;
	$L_1 \gets \{(\ell(\bm{v}), 1) \mid \bm{v} \text{ is a leaf of } T_1\}$\;
	$L_2 \gets \{(\ell(\bm{v}), 1) \mid \bm{v} \text{ is a leaf of } T_2\}$\;
	\If{$L_1$ and $L_2$ are not equal as multisets}{
		\Return "Non label-preserving isomorphic"\;
	}
	
	\While{there exist vertices without new labels}{
		For each tree, define $S_i$ as the set of vertices without new labels where all but at most one neighbor have new labels\;
		
		\ForAll{$\bm{v} \in S_1 \cup S_2$}{
			Collect new labels of labeled neighbors, sort in non-decreasing order to form list $N(\bm{v})$\;
			Tentative label: label $\bm{v}$ with $T(\bm{v}) \gets (\ell(\bm{v}),N(\bm{v}))$ \;
		}
		
		$T_1^{\text{tent}} \gets \{T(\bm{v}) \mid \bm{v} \in S_1\}$\;
		$T_2^{\text{tent}} \gets \{T(\bm{v}) \mid \bm{v} \in S_2\}$\;
		\If{$T_1^{\text{tent}}$ and $T_2^{\text{tent}}$ are not equal as multisets}{
			\Return "Non label-preserving isomorphic"\;
		}
		
		Sort the elements of the sets $\{N(\bm{v}) \mid \bm{v} \in S_1\}$ and $\{N(\bm{v}) \mid \bm{v} \in S_2\}$ in non-decreasing order respectively \;
		In each tree, for the elements of $\{N(\bm{v}) \mid \bm{v} \in S_i\}$, the smallest $N(\bm{v})$ get the new label $l$, where $l$ be the smallest integer that has not been used as a new label before, then $l+1$ to the next smallest, and so on\;
	}
	
	\Return "Label-preserving isomorphic"\;
	\caption{Label-Preserving Isomorphism Check for Trees (LPT).}
	\label{alg:lpt}
\end{algorithm}

\vskip 0.2cm

	To illustrate the algorithm, consider the example in Figure \ref{lizi}, where $T_1$ and $T_2$ are labeled trees with vertex labels $\ell_i$ and $\ell'_i$ (corresponding to $\ell(\bm{v}_i)$ and $\ell(\bm{v}'_i)$ for $i = 1, \dots, 6$). The algorithm proceeds as follows:
	
	\begin{enumerate}
		\item \textbf{Step 1: Leaf Label Initialization} \\
		New label $1$ is assigned to all leaf vertices, forming the multisets:
		\[
		L_1 = \{(\ell_1, 1), (\ell_5, 1), (\ell_6, 1)\}, \quad L_2 = \{(\ell'_1, 1), (\ell'_5, 1), (\ell'_6, 1)\}.
		\]
		If $L_1$ and $L_2$ are equal, proceed to Step 2.
		
		\item \textbf{Step 2: Tentative Label Calculation} \\
		For vertices in $S_1$ and $S_2$ (with at most one unlabeled neighbor):
		\begin{itemize}
			\item For $T_1$: $S_1=\{\bm{v}_2, \bm{v}_4\}$. $\bm{v}_2$ has one labeled neighbor (label $1$), so $N(\bm{v}_2) = [1]$, tentative label $T(\bm{v}_2) = ( \ell_2, [1])$. $\bm{v}_4$ has two labeled neighbors (labels $1$), so $N(\bm{v}_4) = [1, 1]$, tentative label $T(\bm{v}_4) = (\ell_4, [1,1])$.
			\item For $T_2$: $S_2=\{\bm{v}'_2, \bm{v}'_4\}$. $\bm{v}'_4$ has one labeled neighbor (label $1$), so $N(\bm{v}'_4) = [1]$, tentative label $T(\bm{v}'_4) = ( \ell'_4, [1])$. $\bm{v}'_2$ has two labeled neighbors (labels $1$), so $N(\bm{v}'_2) = [1, 1]$, tentative label $T(\bm{v}'_2) = (\ell'_2, [1,1])$.
		\end{itemize}
		The tentative label multisets are:
		\[
		T_1^{\text{tent}} = \{(\ell_2, [1]), (\ell_4, [1, 1])\}, \quad T_2^{\text{tent}} = \{(\ell'_2, [1, 1]), (\ell'_4, [1])\}.
		\]
		If $T_1^{\text{tent}}$ and $T_2^{\text{tent}}$ are equal, proceed to Step 3.
		
		\item \textbf{Step 3: New Label Assignment} \\
	  In each tree, sort the elements of $\{N(\bm{v}) \mid \bm{v} \in S_i\}$ in non-decreasing order, and the smallest unused integer $l$ is assigned:
		\begin{itemize}
			\item For $T_1$: $N(\bm{v}_2)$ gets new label $2$ and $N(\bm{v}_4)$ gets new label $3$.
			\item For $T_2$: $N(\bm{v}'_4)$ gets new label $2$ and $N(\bm{v}'_2)$ gets new label $3$.
		\end{itemize}
		
		\item \textbf{Iteration (Steps 2--3 Repeat)} \\
		For vertices that have not been assigned new labels, such as $\bm{v}_3$ in $T_1$ and $\bm{v}'_3$ in $T_2$, we repeat Steps 2 and 3.
		\begin{itemize}
			\item $S_1=\{\bm{v}_3\}$ and $S_2=\{\bm{v}'_3\}$; $N(\bm{v}_3) = [2, 3]$ and $N(\bm{v}'_3) = [2, 3]$.
			\item $N(\bm{v}_3)$ and $N(\bm{v}'_3)$ get new label $4$ if $T_1^{\text{tent}} \equiv T_2^{\text{tent}}$.
		\end{itemize}
		
		\item \textbf{Termination} \\
		- All vertices assigned new labels and all multisets match $\Rightarrow$ "label-preserving isomorphic". \\
		- Any multiset mismatch $\Rightarrow$ "non label-preserving isomorphic".
	\end{enumerate}

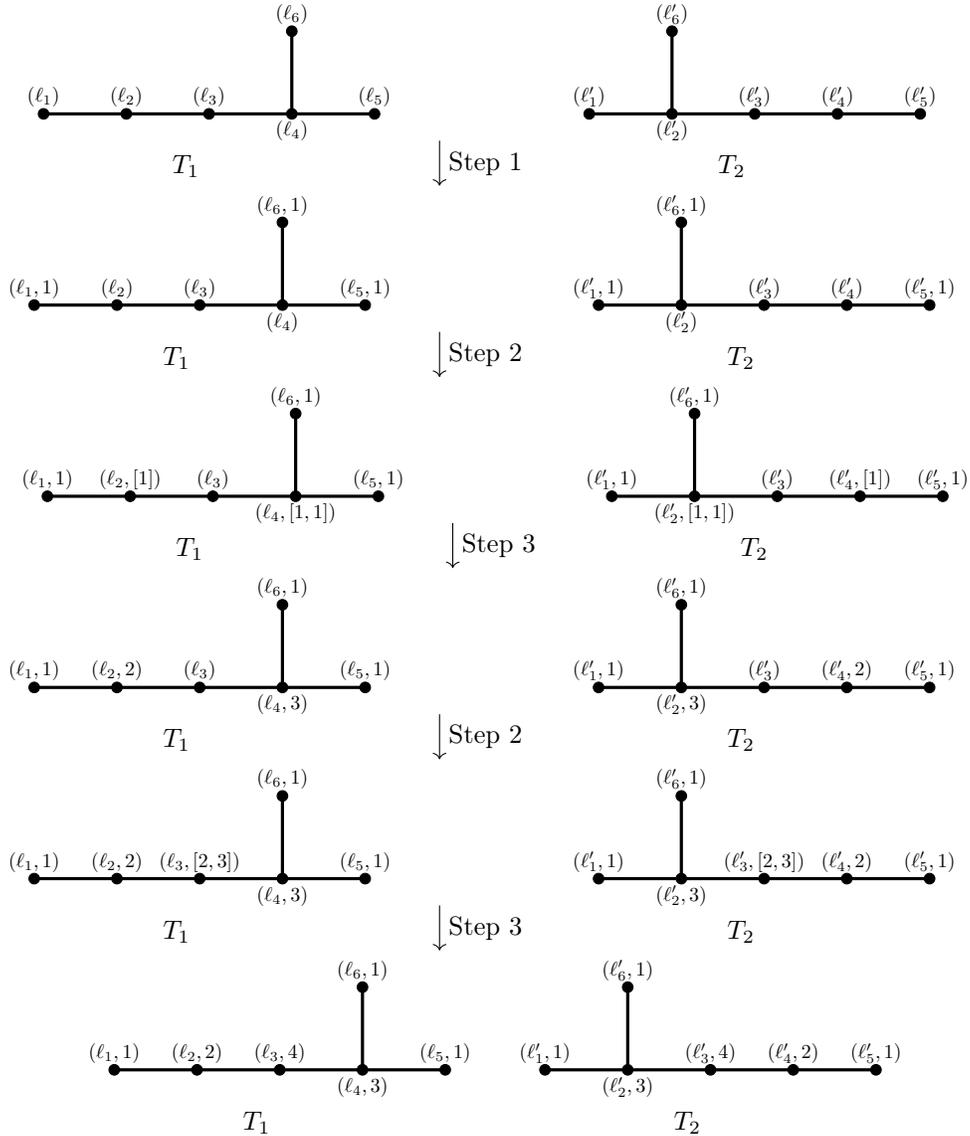
\begin{figure}[hbtp]
	\centering
	\begin{tikzpicture}                                                        
		\draw[black, very thick] (0,0)--(1.1,0);
		\draw[black, very thick] (1.1,0)--(2.2,0);
		\draw[black, very thick] (2.2,0)--(3.3,0);
		\draw[black, very thick] (3.3,0)--(4.4,0);
		\draw[black, very thick] (3.3,0)--(3.3,1.1);
		\filldraw [black] (0,0) circle (2pt) node [above, scale=0.75pt] {$(\ell_{1})$} ;
		\filldraw[black] (1.1,0) circle (2pt) node [above, scale=0.75pt] {$(\ell_{2})$};
		\filldraw[black] (2.2,0) circle (2pt) node [above, scale=0.75pt] {$(\ell_{3})$};
		\filldraw[black] (3.3,0) circle (2pt) node [below, scale=0.75pt] {$(\ell_{4})$};
		\filldraw[black] (4.4,0) circle (2pt) node [above, scale=0.75pt] {$(\ell_{5})$};
		\filldraw[black] (3.3,1.1) circle (2pt) node [above, scale=0.75pt] {$(\ell_{6})$};
		\node[left] at (2.2,-0.7){$T_1$};
	\end{tikzpicture}
	\quad
	\begin{tikzpicture}
		\draw [->] (-1,-2)--(-1,-2.6) node [midway, right] {Step 1};
	\end{tikzpicture}
	\quad
	\begin{tikzpicture}
		\draw[black, very thick] (0,0)--(1.1,0);
		\draw[black, very thick] (1.1,0)--(2.2,0);
		\draw[black, very thick] (2.2,0)--(3.3,0);
		\draw[black, very thick] (3.3,0)--(4.4,0);
		\draw[black, very thick] (1.1,0)--(1.1,1.1);
		\filldraw [black] (0,0) circle (2pt) node [above, scale=0.75pt] {$(\ell'_{1})$} ;
		\filldraw[black] (1.1,0) circle (2pt) node [below, scale=0.75pt] {$(\ell'_{2})$};
		\filldraw[black] (2.2,0) circle (2pt) node [above, scale=0.75pt] {$(\ell'_{3})$};
		\filldraw[black] (3.3,0) circle (2pt) node [above, scale=0.75pt] {$(\ell'_{4})$};
		\filldraw[black] (4.4,0) circle (2pt) node [above, scale=0.75pt] {$(\ell'_{5})$};
		\filldraw[black] (1.1,1.1) circle (2pt) node [above, scale=0.75pt] {$(\ell'_{6})$};
		\node[left] at (2.2,-0.7){$T_2$};
		
	\end{tikzpicture}
	\quad
	\begin{tikzpicture}                                                     
		\draw[black, very thick] (0,-1)--(1.1,-1);
		\draw[black, very thick] (1.1,-1)--(2.2,-1);
		\draw[black, very thick] (2.2,-1)--(3.3,-1);
		\draw[black, very thick] (3.3,-1)--(4.4,-1);
		\draw[black, very thick] (3.3,-1)--(3.3,0.1);
		\filldraw [black] (0,-1) circle (2pt) node [above, scale=0.75pt] {$(\ell_{1},1)$} ;
		\filldraw[black] (1.1,-1) circle (2pt) node [above, scale=0.75pt] {$(\ell_{2})$};
		\filldraw[black] (2.2,-1) circle (2pt) node [above, scale=0.75pt] {$(\ell_{3})$};
		\filldraw[black] (3.3,-1) circle (2pt) node [below, scale=0.75pt] {$(\ell_{4})$};
		\filldraw[black] (4.4,-1) circle (2pt) node [above, scale=0.75pt] {$(\ell_{5},1)$};
		\filldraw[black] (3.3,0.1) circle (2pt) node [above, scale=0.75pt] {$(\ell_{6},1)$};
		\node[left] at (2.2,-1.7){$T_1$};
	\end{tikzpicture}
	\quad
	\begin{tikzpicture}
		\draw [->] (-1,-3)--(-1,-3.6)node [midway, right] {Step 2};
	\end{tikzpicture}
	\quad
	\begin{tikzpicture}
		\draw[black, very thick] (0,0)--(1.1,0);
		\draw[black, very thick] (1.1,0)--(2.2,0);
		\draw[black, very thick] (2.2,0)--(3.3,0);
		\draw[black, very thick] (3.3,0)--(4.4,0);
		\draw[black, very thick] (1.1,0)--(1.1,1.1);
		\filldraw [black] (0,0) circle (2pt) node [above, scale=0.75pt] {$(\ell'_{1},1)$} ;
		\filldraw[black] (1.1,0) circle (2pt) node [below, scale=0.75pt] {$(\ell'_{2})$};
		\filldraw[black] (2.2,0) circle (2pt) node [above, scale=0.75pt] {$(\ell'_{3})$};
		\filldraw[black] (3.3,0) circle (2pt) node [above, scale=0.75pt] {$(\ell'_{4})$};
		\filldraw[black] (4.4,0) circle (2pt) node [above, scale=0.75pt] {$(\ell'_{5},1)$};
		\filldraw[black] (1.1,1.1) circle (2pt) node [above, scale=0.75pt] {$(\ell'_{6},1)$};
		\node[left] at (2.2,-0.7){$T_2$};
	\end{tikzpicture}
	
	\quad
	\begin{tikzpicture}                                                     
		\draw[black, very thick] (0,-1)--(1.1,-1);
		\draw[black, very thick] (1.1,-1)--(2.2,-1);
		\draw[black, very thick] (2.2,-1)--(3.3,-1);
		\draw[black, very thick] (3.3,-1)--(4.4,-1);
		\draw[black, very thick] (3.3,-1)--(3.3,0.1);
		\filldraw [black] (0,-1) circle (2pt) node [above, scale=0.75pt] {$(\ell_{1},1)$} ;
		\filldraw[black] (1.1,-1) circle (2pt) node [above, scale=0.75pt] {$(\ell_{2},[1])$};
		\filldraw[black] (2.2,-1) circle (2pt) node [above, scale=0.75pt] {$(\ell_{3})$};
		\filldraw[black] (3.3,-1) circle (2pt) node [below, scale=0.75pt] {$(\ell_{4},[1,1])$};
		\filldraw[black] (4.4,-1) circle (2pt) node [above, scale=0.75pt] {$(\ell_{5},1)$};
		\filldraw[black] (3.3,0.1) circle (2pt) node [above, scale=0.75pt] {$(\ell_{6},1)$};
		\node[left] at (2.2,-1.7){$T_1$};
	\end{tikzpicture}
	\quad
	\begin{tikzpicture}
		\draw [->] (-1,-3)--(-1,-3.6) node [midway, right] {Step 3};
	\end{tikzpicture}
	\quad
	\begin{tikzpicture}
		\draw[black, very thick] (0,0)--(1.1,0);
		\draw[black, very thick] (1.1,0)--(2.2,0);
		\draw[black, very thick] (2.2,0)--(3.3,0);
		\draw[black, very thick] (3.3,0)--(4.4,0);
		\draw[black, very thick] (1.1,0)--(1.1,1.1);
		\filldraw [black] (0,0) circle (2pt) node [above, scale=0.75pt] {$(\ell'_{1},1)$} ;
		\filldraw[black] (1.1,0) circle (2pt) node [below, scale=0.75pt] {$(\ell'_{2},[1,1])$};
		\filldraw[black] (2.2,0) circle (2pt) node [above, scale=0.75pt] {$(\ell'_{3})$};
		\filldraw[black] (3.3,0) circle (2pt) node [above, scale=0.75pt] {$(\ell'_{4},[1])$};
		\filldraw[black] (4.4,0) circle (2pt) node [above, scale=0.75pt] {$(\ell'_{5},1)$};
		\filldraw[black] (1.1,1.1) circle (2pt) node [above, scale=0.75pt] {$(\ell'_{6},1)$};
		\node[left] at (2.2,-0.7){$T_2$};
	\end{tikzpicture}
	\quad
	\begin{tikzpicture}                                                     
		\draw[black, very thick] (0,-1)--(1.1,-1);
		\draw[black, very thick] (1.1,-1)--(2.2,-1);
		\draw[black, very thick] (2.2,-1)--(3.3,-1);
		\draw[black, very thick] (3.3,-1)--(4.4,-1);
		\draw[black, very thick] (3.3,-1)--(3.3,0.1);
		\filldraw [black] (0,-1) circle (2pt) node [above, scale=0.75pt] {$(\ell_{1},1)$} ;
		\filldraw[black] (1.1,-1) circle (2pt) node [above, scale=0.75pt] {$(\ell_{2},2)$};
		\filldraw[black] (2.2,-1) circle (2pt) node [above, scale=0.75pt] {$(\ell_{3})$};
		\filldraw[black] (3.3,-1) circle (2pt) node [below, scale=0.75pt] {$(\ell_{4},3)$};
		\filldraw[black] (4.4,-1) circle (2pt) node [above, scale=0.75pt] {$(\ell_{5},1)$};
		\filldraw[black] (3.3,0.1) circle (2pt) node [above, scale=0.75pt] {$(\ell_{6},1)$};
		\node[left] at (2.2,-1.7){$T_1$};
	\end{tikzpicture}
	\quad
	\begin{tikzpicture}
		\draw [->] (-1,-3)--(-1,-3.6) node [midway, right] {Step 2};
	\end{tikzpicture}
	\quad
	\begin{tikzpicture}
		\draw[black, very thick] (0,0)--(1.1,0);
		\draw[black, very thick] (1.1,0)--(2.2,0);
		\draw[black, very thick] (2.2,0)--(3.3,0);
		\draw[black, very thick] (3.3,0)--(4.4,0);
		\draw[black, very thick] (1.1,0)--(1.1,1.1);
		\filldraw [black] (0,0) circle (2pt) node [above, scale=0.75pt] {$(\ell'_{1},1)$} ;
		\filldraw[black] (1.1,0) circle (2pt) node [below, scale=0.75pt] {$(\ell'_{2},3)$};
		\filldraw[black] (2.2,0) circle (2pt) node [above, scale=0.75pt] {$(\ell'_{3})$};
		\filldraw[black] (3.3,0) circle (2pt) node [above, scale=0.75pt] {$(\ell'_{4},2)$};
		\filldraw[black] (4.4,0) circle (2pt) node [above, scale=0.75pt] {$(\ell'_{5},1)$};
		\filldraw[black] (1.1,1.1) circle (2pt) node [above, scale=0.75pt] {$(\ell'_{6},1)$};
		\node[left] at (2.2,-0.7){$T_2$};
	\end{tikzpicture}
	\quad
	\begin{tikzpicture}                                                     
		\draw[black, very thick] (0,0)--(1.1,0);
		\draw[black, very thick] (1.1,0)--(2.2,0);
		\draw[black, very thick] (2.2,0)--(3.3,0);
		\draw[black, very thick] (3.3,0)--(4.4,0);
		\draw[black, very thick] (3.3,0)--(3.3,1.1);
		\filldraw [black] (0,0) circle (2pt) node [above, scale=0.75pt] {$(\ell_{1},1)$} ;
		\filldraw[black] (1.1,0) circle (2pt) node [above, scale=0.75pt] {$(\ell_{2},2)$};
		\filldraw[black] (2.2,0) circle (2pt) node [above, scale=0.75pt] {$(\ell_{3},[2,3])$};
		\filldraw[black] (3.3,0) circle (2pt) node [below, scale=0.75pt] {$(\ell_{4},3)$};
		\filldraw[black] (4.4,0) circle (2pt) node [above, scale=0.75pt] {$(\ell_{5},1)$};
		\filldraw[black] (3.3,1.1) circle (2pt) node [above, scale=0.75pt] {$(\ell_{6},1)$};
		\node[left] at (2.2,-0.7){$T_1$};
	\end{tikzpicture}
	\quad
	\begin{tikzpicture}
		\draw [->] (-1,-3)--(-1,-3.6) node [midway, right] {Step 3};
	\end{tikzpicture}
	\quad
	\begin{tikzpicture}
		\draw[black, very thick] (0,0)--(1.1,0);
		\draw[black, very thick] (1.1,0)--(2.2,0);
		\draw[black, very thick] (2.2,0)--(3.3,0);
		\draw[black, very thick] (3.3,0)--(4.4,0);
		\draw[black, very thick] (1.1,0)--(1.1,1.1);
		\filldraw [black] (0,0) circle (2pt) node [above, scale=0.75pt] {$(\ell'_{1},1)$} ;
		\filldraw[black] (1.1,0) circle (2pt) node [below, scale=0.75pt] {$(\ell'_{2},3)$};
		\filldraw[black] (2.2,0) circle (2pt) node [above, scale=0.75pt] {$(\ell'_{3},[2,3])$};
		\filldraw[black] (3.3,0) circle (2pt) node [above, scale=0.75pt] {$(\ell'_{4},2)$};
		\filldraw[black] (4.4,0) circle (2pt) node [above, scale=0.75pt] {$(\ell'_{5},1)$};
		\filldraw[black] (1.1,1.1) circle (2pt) node [above, scale=0.75pt] {$(\ell'_{6},1)$};
		\node[left] at (2.2,-0.7){$T_2$};
	\end{tikzpicture}
	
	\quad
	\begin{tikzpicture}                                                     
		\draw[black, very thick] (0,0)--(1.1,0);
		\draw[black, very thick] (1.1,0)--(2.2,0);
		\draw[black, very thick] (2.2,0)--(3.3,0);
		\draw[black, very thick] (3.3,0)--(4.4,0);
		\draw[black, very thick] (3.3,0)--(3.3,1.1);
		\filldraw [black] (0,0) circle (2pt) node [above, scale=0.75pt] {$(\ell_{1},1)$} ;
		\filldraw[black] (1.1,0) circle (2pt) node [above, scale=0.75pt] {$(\ell_{2},2)$};
		\filldraw[black] (2.2,0) circle (2pt) node [above, scale=0.75pt] {$(\ell_{3},4)$};
		\filldraw[black] (3.3,0) circle (2pt) node [below, scale=0.75pt] {$(\ell_{4},3)$};
		\filldraw[black] (4.4,0) circle (2pt) node [above, scale=0.75pt] {$(\ell_{5},1)$};
		\filldraw[black] (3.3,1.1) circle (2pt) node [above, scale=0.75pt] {$(\ell_{6},1)$};
		\node[left] at (2.2,-0.7){$T_1$};
	\end{tikzpicture}
	\quad
	\begin{tikzpicture}
		\draw[black, very thick] (0,0)--(1.1,0);
		\draw[black, very thick] (1.1,0)--(2.2,0);
		\draw[black, very thick] (2.2,0)--(3.3,0);
		\draw[black, very thick] (3.3,0)--(4.4,0);
		\draw[black, very thick] (1.1,0)--(1.1,1.1);
		\filldraw [black] (0,0) circle (2pt) node [above, scale=0.75pt] {$(\ell'_{1},1)$} ;
		\filldraw[black] (1.1,0) circle (2pt) node [below, scale=0.75pt] {$(\ell'_{2},3)$};
		\filldraw[black] (2.2,0) circle (2pt) node [above, scale=0.75pt] {$(\ell'_{3},4)$};
		\filldraw[black] (3.3,0) circle (2pt) node [above, scale=0.75pt] {$(\ell'_{4},2)$};
		\filldraw[black] (4.4,0) circle (2pt) node [above, scale=0.75pt] {$(\ell'_{5},1)$};
		\filldraw[black] (1.1,1.1) circle (2pt) node [above, scale=0.75pt] {$(\ell'_{6},1)$};
		\node[left] at (2.2,-0.7){$T_2$};
	\end{tikzpicture}
	\centering
	\caption{ Label-Preserving Isomorphism Check for Trees Using LPT Algorithm}\label{lizi}
\end{figure}

Next, given $n$-dimensional convex lattice polytopes $P,P'\subset \mathbb{Z}^{n}$ with $d$ vertices, we describe our algorithm for computing all unimodular affine transformation between $P$ and $P'$.
\begin{itemize}
	\item[\textbf{Step\ 1.}] According to Theorem \ref{veg}, we can get the vertex/edge graphs $G(P)$ and $G(P')$. By (\ref{ld}), for each vertex of graphs, we can get the corresponding label, so as to get the labeled graphs $\mathcal{GW}(P)$ and $\mathcal{GW}(P')$.
	\item[\textbf{Step\ 2.}] Compute a minimum spanning tree $T$ of $\mathcal{GW}(P)$ and all minimum spanning trees of $\mathcal{GW}(P')$, denoted by  $T'_{min}=\{T'_{1},...,T'_{t}\}$. Remove edge weights from $T$ and $T'_j$ to obtain labeled spanning trees $\mathcal{T}$ and $\mathcal{T}_j$ (retaining vertex labels), where $j=1,..,t$. Compute the label-preserving automorphism group $\mathrm{Aut}_{\ell}(\mathcal{T})$ of $\mathcal{T}$. 

	\item[\textbf{Step\ 3.}]  We use Algorithm LPT to search spanning trees in  $\{\mathcal{T}'_{1},...,\mathcal{T}'_{t}\}$ that are label-preserving isomorphic to $\mathcal{T}$. 
	\item[\textbf{Step\ 4.}] 
	Suppose $\varphi$ is a label-preserving isomorphism between $\mathcal{T}$ and $\mathcal{T}'_j$  ($j\in \{1,...,t\}$), inducing a vertex map  $\varphi$: $\mathcal{V}(P) \to \mathcal{V}(P')$. Two polytopes are unimodularly isomorphic iff such $\varphi$ exists, and any unimodular affine transformation $\Phi: \mathbb{Z}^{n}\to \mathbb{Z}^{n}$ mapping $P$ to $P'$ can be factored as $\varphi \circ \chi$, where $\chi \in  \mathrm{Aut}_{\ell}(\mathcal{T})$.
	
	It remains to decide whether a particular choice of $\chi \in  \mathrm{Aut}_{\ell}(\mathcal{T})$ determines a unimodular affine transformation $\varphi \circ \chi : \mathbb{Z}^{n}\to \mathbb{Z}^{n}$ sending $P$ to $P'$. For this,  select $n$ linearly independent vectors $\{\bm{w}_1,...,\bm{w}_n\} \subseteq \mathcal{V}(P)-b_{P}=\{\bm{v}_1-b_{P},...,\bm{v}_d-b_{P}\}$. For $\chi \in \mathrm{Aut}_{\ell}(\mathcal{T})$, we denote the image of $\bm{w}_i+b_{P}$ in $\mathcal{V}(P')$ by $\bm{w}'_{i}+b_{P'}$ under $\varphi \circ \chi$. Construct matrices $\mathcal{W}=(\bm{w}_1,...,\bm{w}_n)$ and $\mathcal{W}'=(\bm{w}'_1,...,\bm{w}'_n)$,  then check if $U:=\mathcal{W}'\mathcal{W}^{-1}$ is unimodular and $\{U\bm{v}|\bm{v}\in \mathcal{V}(P)-b_{P}\}=\mathcal{V}(P')-b_{P'}$. 
	
	\item[\textbf{Step\ 5.}] For each label-preserving isomorphism $\varphi$, and for each $\chi\in \mathrm{Aut}_{\ell}(\mathcal{T})$, we do the process in Step 4. 
\end{itemize}

\begin{thm}
	Through the above algorithm, all unimodular affine transformations between $P$ and $P'$ can be obtained.
\end{thm}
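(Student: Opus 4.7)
The goal is to prove two directions: \textbf{(soundness)} every transformation produced by Step 4 is a unimodular affine map sending $P$ to $P'$; and \textbf{(completeness)} every unimodular affine map $\Phi=(U,\bm Z)$ with $\Phi(P)=P'$ is produced for some choice of $\varphi$ and $\chi$. Throughout, I use the label-invariance established before Definition \ref{label1} and the spanning-tree correspondence of Lemma \ref{11}, plus the basic identity $\Phi(b_P)=b_{P'}$, which follows immediately from the fact that an affine map commutes with averaging over a vertex set and $\Phi$ bijects $\mathcal V(P)\to\mathcal V(P')$.

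For \emph{soundness}, I start from an arbitrary output of Step 4. By construction the matrix $U=\mathcal W'\mathcal W^{-1}$ passes the two checks, namely $U\in GL_n(\mathbb Z)$ and $\{U\bm v:\bm v\in\mathcal V(P)-b_P\}=\mathcal V(P')-b_{P'}$. Convexity then gives $U(P-b_P)=P'-b_{P'}$, i.e.\ $UP+(b_{P'}-Ub_P)=P'$. It remains only to check $\bm Z:=b_{P'}-Ub_P\in\mathbb Z^{n}$; but for any single vertex $\bm v\in\mathcal V(P)$ we have $U\bm v+\bm Z\in\mathcal V(P')\subset\mathbb Z^{n}$ and $U\bm v\in\mathbb Z^{n}$, so $\bm Z$ is integral. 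Hence the pair $(U,\bm Z)$ is a genuine unimodular affine transformation sending $P$ to $P'$, proving soundness.

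For \emph{completeness}, fix an arbitrary unimodular affine $\Phi=(U,\bm Z)$ with $\Phi(P)=P'$. By Lemma \ref{11}, $\Phi$ maps the spanning tree $\mathcal T$ of $\mathcal{GW}(P)$ to a spanning tree $\Phi(\mathcal T)$ of $\mathcal{GW}(P')$ in a label-preserving way. Because edge weights are defined as sums of vertex labels (Remark \ref{vl}) and vertex labels are invariant under $\Phi$, the total weight of $\Phi(\mathcal T)$ equals that of $\mathcal T$; the same invariance applied in reverse shows that the minimum spanning-tree weights of $\mathcal{GW}(P)$ and $\mathcal{GW}(P')$ coincide. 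Therefore $\Phi(\mathcal T)$ is itself a minimum spanning tree of $\mathcal{GW}(P')$, i.e.\ $\Phi(\mathcal T)=\mathcal T'_{j}$ for some $j\in\{1,\dots,t\}$. Step 3, using the LPT algorithm, identifies this $\mathcal T'_{j}$ as label-preserving isomorphic to $\mathcal T$ and returns a label-preserving isomorphism $\varphi_{j}:\mathcal T\to\mathcal T'_{j}$. Setting $\chi:=\varphi_{j}^{-1}\circ(\Phi|_{\mathcal V(P)})$ gives a label-preserving automorphism of $\mathcal T$, hence $\chi\in\mathrm{Aut}_{\ell}(\mathcal T)$, and by construction $\Phi|_{\mathcal V(P)}=\varphi_{j}\circ\chi$.

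Feeding this particular pair $(\varphi_{j},\chi)$ into Step 4, the chosen linearly independent vectors $\bm w_1,\dots,\bm w_n\in\mathcal V(P)-b_P$ are sent to $\bm w'_i+b_{P'}=\Phi(\bm w_i+b_P)=U(\bm w_i+b_P)+\bm Z$. Using $\Phi(b_P)=Ub_P+\bm Z=b_{P'}$, this simplifies to $\bm w'_i=U\bm w_i$, so $\mathcal W'=U\mathcal W$ and the reconstructed matrix $\mathcal W'\mathcal W^{-1}$ equals the original $U$. Both Step-4 checks then pass, so the algorithm outputs exactly $(U,b_{P'}-Ub_P)=(U,\bm Z)=\Phi$. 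The main delicate point—really the only nontrivial step—is the reduction from general spanning trees to minimum spanning trees: one must argue that $\Phi(\mathcal T)$ is not merely label-preserving isomorphic to $\mathcal T$ but is itself a minimum-weight spanning tree of the image graph, which is what guarantees that Step 2's enumeration $T'_{min}$ is exhaustive for the purposes of completeness. Once this is in hand, the remaining bookkeeping through Steps 3--5 is straightforward.
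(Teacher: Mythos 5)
Your proof is correct and follows essentially the same route as the paper's, namely reducing to label-preserving isomorphism of minimum spanning trees and recovering the transformation from $n$ independent vertex vectors. You go further than the paper's brief sketch in two useful ways: you supply a soundness direction (the paper only argues exhaustiveness), and you explicitly justify the two facts the paper treats as immediate — that $\Phi(\mathcal{T})$ is an MST of $\mathcal{GW}(P')$ (via weight invariance and the spanning-tree bijection) and that $\Phi|_{\mathcal V(P)}$ factors as $\varphi_j\circ\chi$ with $\chi:=\varphi_j^{-1}\circ\Phi|_{\mathcal V(P)}\in\mathrm{Aut}_\ell(\mathcal{T})$ — whereas the paper instead phrases the argument through the factorization $\mathcal{U}\circ\mathcal{A}$ with $\mathcal{A}\in\mathrm{Aut}(P)$ and leaves the $\varphi\circ\chi$ decomposition unproved.
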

\begin{proof}
	Let $T$ be a minimum spanning tree of $\mathcal{GW}(P)$. For any automorphism $\mathcal{A}\in \mathrm{Aut}(P)$, $\mathcal{A}(T)$ is  either $T$ itself or another minimum spanning tree of $\mathcal{GW}(P)$. 
	
By Lemma \ref{11},	for any unimodular affine transformation $\mathcal{U}$ such that $P'=\mathcal{U}(P)$, the composition $\mathcal{U} \circ \mathcal{A}$ maps $T$ to an MST of $\mathcal{GW}(P')$.  Specifically, $\{\mathcal{U}(\mathcal{A}(T)): \mathcal{A}\in \mathrm{Aut}(P)\}\subseteq T'_{min}$, and each $\mathcal{U}(\mathcal{A}(T))$ is label-preserving isomorphic to $T$. By Remark \ref{vl}, for MSTs, we only consider isomorphisms that preserve vertex labels.

	Thus, By Algorithm LPT, we identify all spanning trees in $\{\mathcal{T}'_1,...,\mathcal{T}'_t\}$ that are label-preserving isomorphic to $\mathcal{T}$, corresponding to potential vertex maps $\varphi$.  Since every unimodular affine  transformation $\mathcal{UA}$ between $P$ and $P'$ can be decomposed as $\varphi \circ \chi$ with $\chi \in \mathrm{Aut}_{\ell}(\mathcal{T})$, iterating over all $\varphi$ and $\chi$ can find all such transformations.
\end{proof}

\noindent \emph{Advantages of the algorithm}.  Firstly, the vertex/edge graph $G(P)$ is constructed and the vertex labels $\ell(\bm{v}) = \det(A_{\bm{v}})$ are computed in polynomial time, ensuring the initial step (the construction of $\mathcal{GW}(P)$) scales as $\mathcal{O}(\text{poly}(d, n))$. This contrasts with exponential-time approaches for general polytope isomorphism (e.g., \cite{Grinis}),  thus making the algorithm feasible for practical applications.

Secondly, we reduce unimodular isomorphism problem of lattice polytopes to label-preserving isomorphism of minimum spanning trees (MSTs).
Focusing MSTs of $\mathcal{GW}(P)$ and $\mathcal{GW}(P')$ offers several key benefits: 
\begin{itemize}
  
	\item \textbf{Reduction of Search Spaces}: MSTs have the smallest possible total edge weight, reducing the number of trees (graphs) to check. In particular, the number of MSTs (\( t \)) for asymmetric polytopes is generally constant (\( t = \mathcal{O}(1) \)), avoiding the exponential growth of search spaces common in high-dimensional problems.  Additionally, we can efficiently construct an MST via Kruskal's algorithm in \( \mathcal{O}(d^2 \log d) \) time.

\item \textbf{Efficiency of Isomorphism Checking}:  For MSTs, we only consider isomorphisms that preserve vertex labels.
For labeled spanning trees $\mathcal{T}$ and $\mathcal{T}'$, isomorphism is checked using Kobler's algorithm \cite{Kobler}, which runs in $\mathcal{O}(d \log d)$ time for trees with $d$ vertices. This is inherently easier to check than general graphs.

\item \textbf{Reduction of Automorphism Group}: 
Computing the label-preserving automorphism group \( \text{Aut}_{\ell}(\mathcal{T}) \) (Step 2) uses tree centroid decomposition in \( \mathcal{O}(d) \) time. For highly symmetric high-dimensional polytopes, this efficiently enumerates all possible automorphic transformations, preventing redundant computations that would otherwise arise from explicit symmetry enumeration.  
\end{itemize}

\end{document}